\newtheorem{theo}{Theorem}[section]
\newtheorem{lemma}[theo]{Lemma}
\newtheorem{obs}[theo]{Observation}
\newtheorem{propo}[theo]{Proposition}
\newtheorem{defi}[theo]{Definition}
\newtheorem{coro}[theo]{Corollary}
\newtheorem{rem}[theo]{Remark}
\newtheorem{nota}[theo]{Notation}
\newtheorem{exam}[theo]{Example}
\newtheorem{exams}[theo]{Examples}
\newcommand\Alg{\operatorname{\bf Alg}}
\newcommand\Mon{\operatorname{\bf Mon}}
\newcommand\op{\operatorname{op}}
\newcommand\ob{\operatorname{ob}}
\newcommand\id{\operatorname{id}}
\newcommand\Set{\operatorname{\bf Set}}
\newcommand\Pos{\operatorname{\bf Pos}}
\newcommand\Met{\operatorname{\bf Met}}
\newcommand\PMet{\operatorname{\bf PMet}}
\newcommand\Ban{\operatorname{\bf Ban}}
\newcommand\Norm{\operatorname{\bf Norm}}
\newcommand\CMet{\operatorname{\bf CMet}}
\newcommand\Ins{\operatorname{Ins}}
\newcommand\colim{\operatorname{colim}}
\newcommand\ca{\mathcal {A}}
\newcommand\cd{\mathcal {D}}
\newcommand\cg{\mathcal {G}}
\newcommand\ce{\mathcal {E}}
\newcommand\ck{\mathcal {K}}
\newcommand\cm{\mathcal {M}}
\newcommand\ct{\mathcal {T}}
\newcommand\cv{\mathcal {V}}
\newcommand\eps{\varepsilon}
\newcommand\pa{\parallel}
 \newbox\noforkbox \newdimen\forklinewidth
\noforkbox\hbox{\lower 2pt\box1\lower
2pt\box0\relax}
\date{July 25, 2021}
\begin{document}
\title[Metric monads]
{Metric monads}
\author[J. Rosick\'{y}]
{J. Rosick\'{y}}
\thanks{Supported by the Grant Agency of the Czech Republic under the grant 
               19-00902S} 
\address{
\newline J. Rosick\'{y}\newline
Department of Mathematics and Statistics,\newline
Masaryk University, Faculty of Sciences,\newline
Kotl\'{a}\v{r}sk\'{a} 2, 611 37 Brno,\newline
Czech Republic}
\email{rosicky@math.muni.cz}

\begin{abstract}
We develop universal algebra over an enriched category $\ck$ and relate it to finitary enriched monads over $\ck$. Using it, we deduce recent results about ordered universal algebra where inequations are used instead of equations. Then we apply it to metric universal algebra where
quantitative equations are used instead of equations. This contributes to understanding of finitary
monads on the category of metric spaces.
\end{abstract}

\keywords{equational theory, monad, enriched category, metric space, poset}
\maketitle

\section{Introduction}
Classical universal algebra specifies algebras by means of finitary operations and equations. Since Bloom \cite{Bl}, ordered universal algebra, i.e., universal algebra over
posets, uses inequations $\leq$ instead of equations.
Our aim is to get a better understanding of metric universal algebra, i.e., universal algebra over metric spaces. Motivated by probabilistic programming, metric universal algebra was introduced in Mardare, Panangaden and Plotkin \cite{MPP} where quantitative equations $=_\eps$ were used instead of equations. We will show that both cases are instances of a categorical universal algebra where only operations and equations are used.

To explain this, one has to start with Lawvere theories \cite{La} using $(X,Y)$-ary operations $\omega$ interpreted on a set $A$ as mappings $\omega_A:A^X\to A^Y$. Since $Y$ is the coproduct $Y\cdot 1$ of $Y$ copies of the one-element set $1$, these operations are nothing else that $Y$-tuples of $X$-ary operations. Over posets, one uses finite posets as arities and interprets $A^X$ either as a set of monotone mappings $X\to A$ or as a poset of these monotone mappings. Here, $Y$ is a colimit of copies of the terminal poset $1$ and the two-element chain $2$, which replaces equations of $(X,Y)$-ary operations by inequations of $X$-ary operations. When $A^X$ is taken as a poset and operations $\omega_A$ are monotone, the resulting algebras are called coherent in Ad\' amek et al \cite{AFMS}.
Similarly, over metric spaces, one uses finite metric spaces as arities and, for a metric space $A$, interprets $A^X$ as the set of nonexpanding mappings $X\to A$. Now, $Y$ is a colimit of copies of the terminal space $1$ and the space $2_\eps$ having two points of distance $\eps$, which replaces equations of $(X,Y)$-ary operations by quantitative equations of $X$-ary operations. This corresponds to metric algebras of Weaver \cite{W}. To get the quantitative algebras of \cite{MPP}, one has to interpret $A^X$ as the metric space of nonexpanding mappings $X\to A$ and take $\omega_A:A^X\to A^Y$ as nonexpanding.  

The idea of using $(X,Y)$-ary operations where arities are objects $X$ and $Y$ of a category $\ck$ is due to Linton \cite{L} who showed that the category of algebras for a monad $T:\ck\to\ck$ can be given by equations between such operations. Here, $A^X$ is the set $\ck(X,A)$. The enriched case, where $\cv$ is a symmetric monoidal closed category, $\ck$ is a $\cv$-category and $A^X$ is the object $\ck(X,A)$ of $\cv$, was done in Dubuc  
 \cite{D} where $\cv$-equational theories over a $\cv$-category were introduced. 

To do universal algebra over a category $\ck$ one needs a concept of finiteness in $\ck$ to be able to speak about finitary operations. The optimal case is when $\ck$ is locally finitely presentable and finite arities are finitely presentable objects in $\ck$. In the enriched case, one needs that $\cv$ is locally finitely presentable as a closed category and $\ck$ is a locally finitely presentable $\cv$-category (see Kelly \cite{K1}). Since the category $\Pos$ of posets with monotone mappings is locally finitely presentable and cartesian closed, we do not have any problem with finiteness here. But the category $\Met$ of metric spaces (distance $\infty$ is allowed because, otherwise, the category would not be complete, or cocomplete) and nonexpanding mappings is symmetric monoidal closed and only locally $\aleph_1$-presentable. So, finite presentability cannot be taken as finiteness, which makes metric universal algebra much more difficult. But finite metric spaces can be characterized as those which are finitely generated w.r.t. isometries. This idea goes back to Gabriel and Ulmer \cite{GU} and was further developed in \cite{AR2} and \cite{DR}. Let us add that the importance of allowing distance $\infty$ was stressed in Lawvere \cite{La1}.
 
Classical universal algebra can be also described by finitary monads $T$ on the category $\Set$ of sets. Here, a monad is finitary if $T$ preserves filtered colimits. Indeed, 
Linton \cite{L1} showed that finitary monads on $\Set$ correspond to Lawvere theories. For a symmetric monoidal closed category $\cv$ which is locally finitely presentable as a closed category, Power \cite{P} introduced enriched Lawvere theories and showed that they correspond to enriched finitary monads on $\cv$. In Nishizawa and Power \cite{NP}, this was extended to finitary monads on a locally finitely presentable $\cv$-category. This immediately generalizes to every regular cardinal $\lambda$ and describes
$\lambda$-ary enriched monads, i.e., enriched monads preserving $\lambda$-filtered colimits, on every locally $\lambda$-presentable $\cv$-category $\ck$ provided that $\cv$ is locally $\lambda$-presentable as a closed category. The resulting enriched Lawvere theories from \cite{NP} do not fit our aim to do universal algebra over a $\cv$-category $\ck$. Recently, Bourke and Garner \cite{BG} introduced $\ca$-pretheories for every small dense subcategory $\ca$ of $\ck$ and related them to $\ca$-nervous enriched monads on $\ck$. Their pretheories perfectly suit our needs and describe $\lambda$-ary monads on every locally $\lambda$-presentable $\cv$-category $\ck$ provided that $\cv$ is locally $\lambda$-presentable as a closed category. Indeed, if $\ca$ is the full subcategory of $\lambda$-presentable objects of $\ck$ the $\ca$-pretheory of \cite{BG} is given by $(X,Y)$-ary operations and equations; here $X$ and $Y$ are $\lambda$-presentable objects of $\ck$. Over $\Met$, we can apply \cite{DR} and relate equational theories whose operations have finite metric spaces as arities to monads preserving directed colimits of isometries.

Both ordered algebras and quantitative algebras form computationally relevant structures
making possible an algebraic approach to the semantics of programming languages
and computational effects. Since Moggi \cite{M}, monads play an important role here
(see the survey Hyland and Power \cite{HP} and the overview Plotkin and Power \cite{PP}). 
Add that an important paper relating enriched monads and theories is Lucyshyn-Wright \cite{LW1}.

For the comfort of (some) readers, we will do the unenriched case at first. In Section 3, we show that $\lambda$-ary pretheories of \cite{BG} can be presented as equational theories over a category $\ck$ introduced in \cite{R} (motivated by \cite{L}). We show how it can be used to prove the result from \cite{BG} that $\lambda$-ary pretheories over a locally $\lambda$-presentable category $\ck$ correspond to $\lambda$-ary monads on $\ck$. Then we apply it to the case when $\ck$ is not locally finitely presentable but only locally finitely generated in the sense of \cite{DR}, which is the case of metric spaces. In Section 4, we do the enriched case and show that it yields some recent results from 
Ad\' amek et al \cite{AFMS} and \cite{ADV}. In Section 5, we discuss monads on metric spaces in more detail. In particular, we show that monads given by finitary unconditional equational theories of Mardare et al \cite{MPP1} preserve not only filtered colimits but also sifted ones. The same property have monads given by some $\otimes$-theories, for instance the monad given by normed spaces. But a characterization of monads on $\Met$ preserving filtered (or sifted) colimits remains unknown.

\vskip 1mm
\noindent
{\bf Acknowledgement.} The author is grateful to Ji\v r\'\i{} Ad\' amek for inspiring discussions, to John Bourke for a feedback concerning pretheories and to the referees
for their valuable comments.

\section{Preliminaries about categories and metric spaces}
All needed facts about locally presentable categories can be found in \cite{AR},\cite{GU}, or \cite{MP}. In particular, we will freely use that fact that $\lambda$-filtered colimits can be substituted by $\lambda$-directed ones (see \cite[Remark 1.21]{AR}). A monad on a locally presentable category will be called \textit{$\lambda$-ary} if it preserves $\lambda$-filtered colimits, which is equivalent to the preservation of $\lambda$-directed colimits. We will also need $\cm$-locally generated categories $\ck$ where $(\ce,\cm)$ is a $\lambda$-convenient factorization system on $\ck$ (see \cite{DR}).
Here, an object $A$ of $\ck$ is $\lambda$-generated w.r.t. $\cm$ if $\ck(A,-):\ck\to\Set$
preserves $\lambda$-directed colimits of morphisms from $\cm$. Now, a cocomplete category $\ck$ is $\cm$-locally $\lambda$-presentable if it has a set $\ca$ of $\lambda$-generated objects w.r.t. $\cm$ such that every object of $\ck$ is a $\lambda$-directed colimit of objects from $\ca$ and morphisms from $\cm$. Every $\cm$-locally $\lambda$-generated category is locally $\mu$-presentable for some $\mu\geq\lambda$. But it does not need to be locally $\lambda$-presentable.

All needed facts about enriched categories can be found in \cite{Bo}, or \cite{K}. Like 
in \cite{K1}, $\lambda$-filtered colimits are conical ones. \cite{K1} is also a reference for locally presentable $\cv$-categories. Recall that an object $A$ of a $\cv$-category $\ck$ is $\lambda$-presentable in the enriched sense if its enriched hom-functor $\ck(A,-):\ck\to\cv$
preserves $\lambda$-filtered colimits. And a $\cv$-category $\ck$ is locally 
$\lambda$-presentable in the enriched sense if it has weighted colimits and a set of $\lambda$-presentable objects in the enriched sense such that every object of $\ck$ is a $\lambda$-filtered colimit of them. Enriched $\cm$-locally $\lambda$-generated categories were introduced in \cite{DR}. Here, $(\ce,\cm)$ should be a $\cv$-factorization system on a $\cv$-category $\ck$  and an object $A$ is $\lambda$-generated in the enriched sense if $\ck(A,-):\ck\to\cv$ preserves $\lambda$-directed colimits of $\cm$-morphisms. Now, a $\cv$-category $\ck$ is $\cm$-locally $\lambda$-generated if it has weighted colimits and a set $\ca$ of $\lambda$-generated objects w.r.t. $\cm$ in the enriched sense such that every object of $\ck$ is a $\lambda$-directed colimit of objects from $\ca$ and $\cm$-morphisms.

We denote by $\Met$ the category of (generalized) metric spaces and nonexpanding mappings. This means that the metric $d$ takes values in $[0,\infty]$ and satisfies the usual axioms. A mapping $f:A\to B$ is nonexpanding if $d(x,y)\geq d(fx,fy)$ for every $x,y\in A$. 
The category $\Met$ is complete and cocomplete with products given by the sup-metric
(max-metric for finite products). Moreover, $\Met$ is locally $\aleph_1$-presentable (see \cite[Examples 4.5(3)]{LR}) and $\aleph_1$-presentable metric spaces are precisely those having a cardinality $\leq\aleph_0$ (see \cite[Proposition 2.6]{AR1}). The only $\aleph_0$-presentable object in $\Met$ is the empty space (see \cite[Remark 2.7(1)]{AR1}).

A metric space is \textit{discrete} if all non-zero distances are $\infty$. Discrete metric
spaces are precisely copowers of the terminal metric space $1$. The metric space $2_\eps$
has two points of distance $\eps$. In particular, $2_\infty=1+1$. Finite metric spaces
$1$ and $2_\eps$ form a dense subcategory of $\Met$, i.e., every metric space is a canonical colimit of copies of $1$ and $2_\eps$.

Following \cite[Remark 2.5(2)]{AR1}), every finite metric space $A$ is finitely generated w.r.t. isometries, which means that $\Met(A,-):\Met\to\Set$ preserves $\lambda$-directed colimits of isometries. In $\Met$, we have (surjective, isometry) factorizations (see 
\cite[Examples 3.16]{AR1}) and, following \cite[Remark 2.5(2)]{AR1}), this factorization system is $\aleph_0$-convenient in the sense of \cite{DR}. Since every metric space is a directed colimit of its finite subspaces, $\Met$ is isometry-locally finitely generated.

The category $\Met$ is symmetric monoidal closed with the tensor product $X\otimes Y$ putting the $+$-metric $(d\otimes d)((x,y),(x',y'))=d(x,x')+d(y,y')$ 
on $X\times Y$. The tensor unit $I$ is the terminal
metric space $1$. The in\-ter\-nal hom equips the hom-set  $\Met(X,Y)$ with the sup-metric $d(f,g)$ given by $\sup\{d(fx,gx)\;|\;x\in X\}$.  
The category $\Met$ is locally $\aleph_1$-presentable as a closed category. Since $\Met(I,-):\Met\to\Set$ creates $\aleph_1$-directed colimits, $\aleph_1$-presentability in the enriched and ordinary sense coincide. 

Let $\PMet$ be the category of (generalized) pseudometric spaces and nonexpansive maps. (The difference is just that for a pseudometric we do not require $d(x,y)>0$ if $x\neq y$.) $\Met$ is a full reflective subcategory of $\PMet$.
 
\begin{lemma}\label{product}
The reflector $F:\PMet\to\Met$ preserves finite products.
\end{lemma}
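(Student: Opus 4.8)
The plan is to use the explicit description of the reflector. For a pseudometric space $(X,d)$ the reflection $FX$ is the quotient $X/{\sim}$, where $x\sim x'$ iff $d(x,x')=0$, equipped with the induced metric $\bar d([x],[x'])=d(x,x')$ (well defined precisely because $\sim$ is the kernel relation of $d$, so that $d(x,x')=0$ forces the two classes to agree); the unit $\eta_X\colon X\to FX$ is the quotient map. Two features of $\eta_X$ will do all the work: it is surjective, and it preserves distances exactly, i.e.\ $\bar d(\eta_X x,\eta_X x')=d(x,x')$.

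First I would dispose of the empty product. The terminal object of $\PMet$ is the one-point space, which already lies in $\Met$, and $\eta$ is an isomorphism on objects of $\Met$; hence $F$ preserves the terminal object. It remains to treat a binary product $X\times Y$. Since the morphisms are nonexpanding, the categorical product carries the sup-metric $d((x,y),(x',y'))=\max\{d_X(x,x'),d_Y(y,y')\}$ (not the tensor $+$-metric), and with this metric $\Met$ is closed under finite products in $\PMet$: a maximum of two distances vanishes iff both of them vanish, so a product of metric spaces is again metric. Consequently $FX\times FY$ already lies in $\Met$, and the comparison morphism is $\langle F\pi_X,F\pi_Y\rangle\colon F(X\times Y)\to FX\times FY$.

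The core step is to show this comparison is an isomorphism, for which I would verify that $\eta_X\times\eta_Y\colon X\times Y\to FX\times FY$ is itself a reflection of $X\times Y$ into $\Met$. It is surjective (a product of surjections) and distance-preserving for the sup-metric, since $\max\{\bar d_X(\eta_X x,\eta_X x'),\bar d_Y(\eta_Y y,\eta_Y y')\}=\max\{d_X(x,x'),d_Y(y,y')\}$. Now any surjective distance-preserving map $q\colon P\to M$ from a pseudometric space onto a \emph{metric} space is a reflection: $q$ factors through $P/{\sim}$ because $d_P(p,p')=0$ forces $d_M(qp,qp')=0$, hence $qp=qp'$, and the induced map $P/{\sim}\to M$ is a surjective distance-preserving map into a metric space, so it is injective and thus a bijective isometry. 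Applying this to $q=\eta_X\times\eta_Y$ identifies $FX\times FY$ with $F(X\times Y)$ compatibly with the projections, which is exactly the assertion.

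I do not expect a genuine obstacle here; the only point demanding care is the bookkeeping that the relevant product is the sup-metric rather than the tensor $+$-metric, together with the observation that the zero-distance relation on $X\times Y$ is the product of the zero-distance relations on $X$ and $Y$. Everything else reduces to the routine fact that a surjective distance-preserving map onto a metric space is the reflection unit.
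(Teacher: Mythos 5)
Your proof is correct and takes essentially the same route as the paper: both rest on the explicit description of $F$ as the metric quotient identifying points at distance zero, together with the observation that the zero-distance relation on a product is the product of the zero-distance relations on the factors. The paper compresses this into two sentences, and your extra bookkeeping (the sup-metric on the categorical product, and the lemma that a surjective distance-preserving map onto a metric space is a reflection) is a careful elaboration of that same argument rather than a different one.
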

\begin{proof}
The reflection $FX$ is the metric quotient of $X$ where we identify $x,y\in X$ such that $d(x,y)=0$. Since the distance on a product is given by the max-metric, $d((x,x'),(y,y'))=0$ iff $d(x,x')=d(y,y')=0$. Hence we get the result.
\end{proof}

This was observed in \cite[Corollary 3.10]{AMMU} for 1-bounded metric spaces.

\begin{propo}\label{commute}
In $\Met$, finite products commute with filtered colimits. 
\end{propo}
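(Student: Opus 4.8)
The plan is to reduce everything to the category $\PMet$, where filtered colimits are computed on underlying sets, and then transport the result back along the reflection $F$ using Lemma \ref{product}. Write $J:\Met\to\PMet$ for the inclusion. Since $J$ is right adjoint to $F$ it preserves limits, so the categorical product of two metric spaces (carrying the max-metric $d((x,y),(x',y'))=\max\{d(x,x'),d(y,y')\}$) is computed the same way in $\Met$ and in $\PMet$. Moreover, because $\Met$ is reflective in $\PMet$ and $F$ preserves colimits, the colimit in $\Met$ of any diagram $D$ is $F(\colim^{\PMet}JD)$, and $FJ\cong\Id$ on metric spaces. It therefore suffices to fix $B\in\Met$ and a filtered diagram $D:\mathcal I\to\Met$ and show that $(-)\times B$ preserves the colimit; the general statement that finite products commute with filtered colimits then follows by the usual diagonal-cofinality argument applied twice, the nullary case being trivial since filtered categories are connected and nonempty.

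The heart of the argument is that finite products commute with filtered colimits already in $\PMet$. Here the forgetful functor $\PMet\to\Set$ creates filtered colimits: the underlying set is the colimit in $\Set$, and the pseudometric on classes is $d([x_i],[y_j])=\inf_{k}d_k(\bar x,\bar y)$, the infimum over stages $k$ above $i,j$ of the (nonincreasing) distances of the images $\bar x,\bar y$. Computing both sides of the comparison for the diagram $JD_i\times JB$, and noting that the second coordinate $b,b'$ never moves because $B$ is constant, the left-hand distance is $\inf_k\max\{d_k(\bar x,\bar y),\,d_B(b,b')\}$ while the right-hand one is $\max\{\inf_k d_k(\bar x,\bar y),\,d_B(b,b')\}$. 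These coincide because $\max(-,c)$ commutes with infima for a constant $c$, and the fixed factor $B$ contributes exactly such a constant term. This elementary commutation of $\max$ with infima is the one genuinely non-formal step.

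Finally I would assemble the pieces:
$$\colim_i(D_i\times B)\;\cong\;F\,\colim_i^{\PMet}(JD_i\times JB)\;\cong\;F\big((\colim_i^{\PMet}JD_i)\times JB\big)\;\cong\;F(\colim_i^{\PMet}JD_i)\times F(JB)\;\cong\;(\colim_i D_i)\times B,$$
using in turn the colimit formula in $\Met$ together with $J$ preserving products, the commutation in $\PMet$, Lemma \ref{product} (that $F$ preserves finite products), and $FJ\cong\Id$. A routine check identifies this composite with the canonical comparison map. The main obstacle is really bookkeeping: ensuring that the reflection $F$ does not disturb products, which is precisely what Lemma \ref{product} guarantees; once that and the $\max$/$\inf$ commutation are in hand, the rest is formal reasoning about reflective subcategories and cofinal diagonals.
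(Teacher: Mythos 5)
Your proof is correct and follows essentially the same route as the paper: reduce to $\PMet$ via Lemma \ref{product}, use finality of the diagonal to pass to the one-variable functor $X\times -$, and verify the distance formula for filtered colimits in $\PMet$. The only difference is that you spell out the $\inf$/$\max$ commutation explicitly, whereas the paper delegates this computation to the argument of \cite{AMMU} for $1$-bounded metric spaces.
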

\begin{proof}
Following \ref{product}, it suffices to prove that finite products commute with filtered colimits in $\PMet$. The proof is the same as that in \cite{AMMU} for 1-bounded metric spaces. In fact, let $X$ be a pseudometric space and $D:\cd\to\PMet$ a filtered diagram. Then it suffices to realize that $X\times -:\PMet\to\PMet$ preserves filtered colimits because the diagonal $D\to D\times D$ is final. 
\end{proof}

\begin{rem}
{
\em
General finite limits do not commute with filtered colimits in $\Met$. For instance,
filtered colimits of monomorphisms are not necessarily monomorphisms. It suffices
to take the identity maps $2_\infty\to 2_{\frac{1}{n}}$ whose colimit is $2_\infty\to 1$.
}
\end{rem}
\begin{coro}\label{finpres} 
Let $A$ be a finite discrete metric space. Then its hom-functor 
$$
\Met(A,-):\Met\to\Met
$$ 
preserves filtered colimits. In particular, $A$ is finitely presentable in the enriched sense.
\end{coro}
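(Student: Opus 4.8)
The plan is to identify the enriched hom-functor $\Met(A,-)$ with a finite power functor and then invoke Proposition~\ref{commute}. Write $a_1,\dots,a_n$ for the points of the finite discrete space $A$; here ``discrete'' means that distinct points lie at distance $\infty$.

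First I would observe that discreteness forces every map $f\colon A\to B$ into an arbitrary metric space $B$ to be nonexpanding, because the constraint $d(a_i,a_j)\geq d(fa_i,fa_j)$ is vacuous for $i\neq j$ (its left-hand side being $\infty$). Hence the underlying set of $\Met(A,B)$ is the full power $B^n$ via $f\mapsto(fa_1,\dots,fa_n)$. Second I would check that this bijection is an isometry onto the categorical product $B^n$ in $\Met$: the internal hom carries the sup-metric $d(f,g)=\sup_i d(fa_i,ga_i)$, which over a finite index set is the maximum $\max_i d(fa_i,ga_i)$, and this is precisely the max-metric defining the product $B^n$ in $\Met$. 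Thus $\Met(A,B)\cong B^n$ naturally in $B$, i.e.\ $\Met(A,-)\cong(-)^n$ as endofunctors of $\Met$.

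Third, I would apply Proposition~\ref{commute}: since finite products commute with filtered colimits, the $n$-th power functor $(-)^n$ preserves filtered colimits (using, exactly as in the proof of \ref{commute}, that the diagonal of the colimit diagram is final). Transporting along the isomorphism above shows that $\Met(A,-)$ preserves filtered colimits. For the final clause, recall that enriched filtered colimits in $\Met$ are conical and hence computed as ordinary filtered colimits; so preservation of filtered colimits by the enriched hom-functor $\Met(A,-)\colon\Met\to\Met$ is by definition the assertion that $A$ is finitely presentable in the enriched sense.

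The argument is short, and the only delicate point is the isometry claim in the second step. It relies on $A$ being both finite, so that the supremum defining the internal-hom metric collapses to the finite maximum that defines the product metric, and discrete, so that the hom is the entire power $B^n$ rather than a proper subspace cut out by the finite distances of $A$. Dropping either hypothesis breaks the clean identification with a finite power and would require a separate argument.
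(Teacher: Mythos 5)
Your proof is correct and is essentially the paper's argument: the corollary appears there as a direct consequence of Proposition~\ref{commute}, precisely via the identification $\Met(A,B)\cong B^n$ that you spell out (discreteness making every map nonexpanding, finiteness collapsing the sup-metric of the internal hom to the max-metric of the categorical product). You merely make explicit the isometry check and the conical-colimit remark that the paper leaves implicit.
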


\begin{rem}
{
\em
Non-discrete finite metric spaces are not finitely presentable in the enriched sense.
Indeed, given a finite space $(A,d)$ of more than one element, let $d_n$ be the metric on $A$ given by $d_n (x,y)=d(x,y)+ \frac 1n$ for $n=1,2,3,\dots$. This defines an $\omega$-chain
 $$
 (A, d_1) \xrightarrow{\ \id\ } (A, d_2) \xrightarrow{\ \id \ } (A, d_3) \cdots
 $$
 with colimit $(A,d)$. Obviously, the identity morphism of $(A,d)$ does not factorize through any of the  colimit maps $\id \colon (A, d_n) \to (A,d)$. Hence, $(A,d)$ is not finitely presentable
in the enriched sense.
 
Consequently, finite non-discrete cotensors do not commute with filtered colimits in $\Met$. 
}
\end{rem}

\section{Algebras in general categories}
The following definition captures the Lawvere-Linton approach to equational theories.
Given a category $\ck$ and a small dense subcategory $\ca$ of $\ck$ (this includes
that $\ca$ is a full subcategory), the $\ca$-\textit{nerve} functor 
$N_\ca:\ck\to\Set^{\ca^{\op}}$ assigns to every object $K$ the domain restriction of $\ck(-,K)$ on $\ca^{\op}$. Given $X$ in $\ca$ and $K$ in $\ck$, we will denote the set $\ck(X,K)$ as $K^X$. Similarly, $K^f=\ck(f,K)$ and $h^X=\ck(X,h)$.

\begin{defi}[\cite{BG}]\label{pretheory}
{
\em
Let $\ck$ be a category and $\ca$ a small dense subcategory of $\ck$. An $\ca$-\textit{pretheory} is an identity-on-objects functor $J:\ca^{\op}\to\ct$.

A $\ct$-\textit{algebra} is an object $A$ of $\ck$ together with a functor $\hat{A}:\ct\to\Set$ such that $\hat{A}\cdot J=N_\ca(A)$.  

A \textit{homomorphism} of $\ct$-algebras $(A,\hat{A})\to(B,\hat{B})$ is a morphisms $h:A\to B$  
 together with a natural transformation $\hat{h}:\hat{A}\to\hat{B}$ such that 
$\hat{h}J=N_\ca(h)$.   

The resulting category of $\ct$-algebras is denoted as $\Alg(\ct)$. 
}
\end{defi}

\begin{rem}\label{pretheory1}
{
\em
This definition can be unpacked as follows. Objects of $\ca$ are \textit{arities}.
Morphisms $\omega:J(X)\to J(Y)$ of $\ct$ are $(X,Y)$-ary operations where $X$ is
the \textit{input arity} and $Y$ is the \textit{output arity} of $\omega$. 
In particular, a morphism $f:Y\to X$ gives the $(X,Y)$-ary operation $J(f)$. 
A $\ct$-algebra is an object $A$ of $\ck$ together with mappings $\omega_A:A^X\to A^Y$
for every $\omega:X\to Y$ in $\ct$ such that
\begin{enumerate}
\item $J(f)_A= A^f$, and
\item $(\tau\omega)_A=\tau_A\omega_A$.
\end{enumerate}
Homomorphism $h:A\to B$ of $\ct$-algebras are morphisms $h:A\to B$ such that $h^Y\omega_A=\omega_B h^X$ for every morphism $\omega:X\to Y$ of $\ct$.

Let $\ck=\Set$ and $\ca$ consist of finite sets. Then a $\ca$-pretheories coincide
with Lawvere theories. Morphism $J(n)\to J(m)$ are $m$-tuples of $n$-ary operations.
In the language of universal algebra, Lawvere theories correspond to clones of finitary
operations where compositions are finitary multiple compositions, i.e., superpositions
of operations. Mappings $1\to m$ provide variables $x_1,\dots,x_m$. 

In the same way as clones are presented by equational theories we can present pretheories
by equational theories over $\ck$.
}
\end{rem}

\begin{defi}[\cite{R}]\label{theory}
{
\em
Let $\ck$ be a category and $\ca$ a full subcategory of $\ck$. An $\ca$-\textit{ary type} $t$ over $\ck$ is a class $\Omega$ equipped with a mapping $t:\Omega\to\ob(\ca)\times\ob(\ca)$ such that $\Omega^{X,Y}=t^{-1}(X,Y)$ are sets for every $X,Y\in\ob(\ca)$. Elements of $\Omega^{X,Y}$ are called $(X,Y)$-\textit{ary operation symbols} of type $t$.

\textit{Terms} of type $t$ are inductively defined as follows:
\begin{enumerate}
\item Every $\omega\in\Omega$ is a $t(\omega)$-ary term,
\item Every morphism $f:Y\to X$ of $\ca$ determines an $(X,Y)$-ary term $x_f$, 
\item If $p$ is an $(X,Y)$-ary term and $q$ an $(Y,Z)$-ary term then $qp$ is an $(X,Z)$-ary term, 
\item $x_{fg}=x_gx_f$, 
\item $(pq)r=p(qr)$, and
\item If $p$ is an $(X,Y)$-ary term then $x_{\id_Y}p=p=px_{\id_X}$.
\end{enumerate}

\textit{Equations} $p=q$ of type $t$ are pairs $(p,q)$ of terms of type $t$. An \textit{equational theory} of type $t$ is a class $E$ of equations of type $t$. 
}
\end{defi}

\begin{defi}
{
\em
An \textit{algebra} of type $t$ is an object $A$ of $\ck$ equipped with mappings 
$\omega_A:A^X\to A^Y$ for every $(X,Y)$-ary operation symbol $\omega$ of $t$.

Terms are interpreted in $A$ as follows:
\begin{enumerate}
\item $(x_f)_A= A^f$, and
\item $(qp)_A=q_Ap_A$.
\end{enumerate}

An algebra $A$ \textit{satisfies} an equation $p=q$ if $p_A=q_A$. It satisfies a theory $E$ if it satisfies all equations of $E$.

A \textit{homomorphism} $h:A\to B$ of $t$-algebras is a morphism $h:A\to B$ such that $h^Y\omega_A=\omega_B h^X$ for every $\omega\in\Omega^{X,Y}$.

$\Alg(E)$ will denote the category of $E$-algebras and $U_E:\Alg(E)\to\ck$ will be the forgetful functor.
}
\end{defi}

\begin{rem}
{
\em
The definition of an equational theory in \cite{R} does not contain conditions (5) an (6). But it does not influence the concept of an algebra because the composition of mappings is associative and $x_{\id}$ are interpreted as identities. 

Let $\ct$ be an $\ca$-pretheory and take $\Omega$ as the set of morphisms of $\ct$ and 
$t(\omega)=(X,Y)$ for $\omega:J(X)\to J(Y)$. The resulting $\ca$-ary type $t$ has the same algebras as $\ct$ and its terms coincide with operation symbols.

Conversely, consider an $\ca$-ary type $t$. Let $\tilde{\Omega}$ denote the category of terms of type $t$ where $\tilde{\Omega}(X,Y)$ consists of $(X,Y)$-ary terms. Compositions are given by (3) and identities are $x_{\id}$. Our definition makes $\tilde{\Omega}$ a category and $x_-:\ca^{\op}\to\tilde{\Omega}$ is a functor which is an identity on objects. Then an algebra $A$ is equipped with a functor $\tilde{\Omega}\to\Set$ whose composition with $x_-$ is $\ck(-,A)$. Hence $t$-algebras coincide with $\tilde{\Omega}$-algebras.

Given an equational theory $E$ of type $t$, let $\tilde{\Omega}_E=\tilde{\Omega}/E$
be the quotient of $\tilde{\Omega}$ given by $E$. Again, $E$-algebras coincide with
$\tilde{\Omega}/E$-algebras.
Thus our equational theories and algebras coincide with pretheories and algebras in the sense of \cite{BG}.
}
\end{rem}

\begin{defi}
{
\em
Let $\ck$ be a locally $\lambda$-presentable category and $\ck_\lambda$ denote the (representative) small full subcategory consisting of $\lambda$-presentable objects.
Then $\ck_\lambda$-ary types will be called $\lambda$-\textit{ary}. Similarly, equational
theories of $\lambda$-ary types are called $\lambda$-ary.
}
\end{defi}

\begin{exams}\label{first}
{
\em
(1) Let $\Pos$ be the category of posets and monotone mappings. $\Pos$ is a locally finitely presentable category. Consider a type $t$ over $\Pos$ and $\omega\in\Omega^{X,Y}$ be its $(X,Y)$-ary operation. Elements of $Y$ correspond to monotone mappings $f:1\to Y$. Then terms $x_f\omega$ are usual $X$-ary operations. For a $t$-algebra $A$, $A^X$ is the set of monotone mapping $a:X\to A$. Then $\omega_A:A^X\to A^Y$ is a mapping. We can replace $\omega$ by an $Y$-tuple of $X$-ary terms $x_f\omega$, $f:1\to Y$. But we have to force that $\omega_A(a):Y\to A$ is monotone for every monotone $a:X\to A$. This means that $x_f\omega(a)\leq x_g\omega(a)$ for every $f\leq g$, $f,g:1\to Y$ and every $a\in A^X$. But this is the same as $x_f\omega\leq x_g\omega$ for every $f\leq g$, $f,g:1\to Y$. Of course, $f\leq g$ iff the corresponding elements $y,z\in Y$ satisfy $y\leq z$.

Consequently, we can reduce $(X,Y)$-ary operations to $X$-ary operations but we have to replace equations in equational theories by inequations $p\leq q$ of terms. Conversely, every such inequational theory can be replaced by an equational theory with $(X,Y)$-ary operations. In fact, given $X$-ary terms $p$ and $q$, then an inequation $p\leq q$ corresponds to the existence of a $(X,2)$-ary operation $r$ such that $x_0r=p$ and $x_1 r=q$ where $2$ is a two-element chain $\{0,1\}$. Here $0:1\to 2$ has the value $0$ and
$1:1\to 2$ has the value $1$.

Hence our finitary equational equational theories over $\Pos$ correspond to usual
inequational theories for ordered algebras (see \cite{AFMS}).

(2) Consider a type $t$ over $\Met$. Analogously to (1), we can reduce $(X,Y)$-ary operations to $X$-ary ones but we have to ensure that $\omega_A(a)$ is nonexpanding for every nonexpanding $a:X\to A$. This means that
$$
d(x_f\omega(a),x_g\omega(a))\leq d(f,g)
$$
for every $f,g:1\to Y$ and every $a\in A^X$. This is the same as 
$$
d(x_f\omega,x_g\omega)\leq d(f,g).
$$
Of course, $d(f,g)$ is the distance $d(y,z)$ of the corresponding elements.

Consequently, we can reduce $(X,Y)$-ary operations to $X$-ary operations but we have to replace equations in equational theories by "quantitative equations" $p=_\eps q$ of terms. Conversely, every such metric equational theory can be replaced by an equational theory with $(X,Y)$-ary operations. In fact, given $X$-ary terms $p$ and $q$, then a quantitative equation $p=_\eps q$ corresponds to the existence of a $(X,2_\eps)$-ary operation $r$ such that $x_0r=p$ and $x_1 r=q$. 

Hence our equational theories over $\Met$ correspond to "metric equational" theories for metric algebras. The special case when the input arities $X$ are finite discrete metric spaces and the output arities $Y$ are finite metric spaces was considered in \cite{W}, or \cite{H} where the resulting categories $\Alg(E)$ were called varieties of metric algebras. If the arities $Y$ are also discrete, usual equations are used.  
}
\end{exams}

\begin{theo}\label{monad0}
Let $\ck$ be a locally $\lambda$-presentable and $E$ be a $\lambda$-ary equational theory over $\ck$. Then $\Alg(E)$ is locally $\lambda$-presentable and $U_E$ is monadic and preserves $\lambda$-filtered colimits.
\end{theo}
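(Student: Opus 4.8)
The plan is to realize $\Alg(E)$ as a category of models for a suitable sketch, or equivalently to exhibit $U_E$ as the forgetful functor of an Eilenberg–Moore category, and then invoke the standard machinery for locally presentable categories. I would proceed in three stages: first construct the category, then establish local $\lambda$-presentability, and finally handle monadicity together with preservation of $\lambda$-filtered colimits.

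\textbf{Step 1: $\Alg(E)$ as a category of $\lambda$-accessible functors.} The key observation is that an operation symbol $\omega\in\Omega^{X,Y}$ gives, for each algebra $A$, a map $\omega_A\colon A^X\to A^Y$, where $A^X=\ck(X,A)$ is the value at $A$ of a $\lambda$-filtered-colimit-preserving functor (since $X$ is $\lambda$-presentable). An $E$-algebra is thus an object of $\ck$ together with a natural family of such maps subject to the equations, and a homomorphism is a morphism of $\ck$ commuting with them. I would describe $\Alg(E)$ as the category of models of a \emph{limit sketch} built on $\ck$: take the underlying object $A$, the objects $A^X$ and $A^Y$ (obtainable as $\lambda$-small limits in $\ck$ of representables restricted to $\ck_\lambda$, using that $X,Y$ are $\lambda$-presentable), and impose the operation maps and the equations as commutativity conditions. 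Because $\ck$ is locally $\lambda$-presentable it is itself the category of models of a $\lambda$-ary limit sketch, and adjoining the (set-indexed) operations and equations keeps the sketch $\lambda$-small in each constraint.

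\textbf{Step 2: local presentability and monadicity.} Once $\Alg(E)$ is exhibited as the models of a $\lambda$-ary sketch, local $\lambda$-presentability follows from the standard theorem that categories of models of $\lambda$-ary sketches are locally $\lambda$-presentable (Gabriel–Ulmer, cf. \cite{AR}). For monadicity I would verify the hypotheses of the crude monadicity / Beck theorem for $U_E$: it is evidently faithful, and it reflects isomorphisms because a homomorphism whose underlying morphism is invertible has an inverse that automatically commutes with the operations; the key is that $U_E$ has a left adjoint (the free $E$-algebra), which exists because $\Alg(E)$ is cocomplete and $U_E$ preserves limits and $\lambda$-filtered colimits, so the adjoint functor theorem for locally presentable categories applies. $U_E$ creates $U_E$-split coequalizers (indeed all limits and $\lambda$-filtered colimits), giving monadicity.

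\textbf{Step 3: preservation of $\lambda$-filtered colimits.} This is where the $\lambda$-arity hypothesis does its real work and is the step I expect to be the main obstacle. The point is that $A^X=\ck(X,A)$ preserves $\lambda$-filtered colimits precisely because $X$ is $\lambda$-presentable, and likewise for $Y$; hence the operation maps $\omega_A$ and the equation constraints are all expressed through functors that commute with $\lambda$-filtered colimits. I would argue that $\lambda$-filtered colimits in $\Alg(E)$ are computed on underlying objects in $\ck$: given a $\lambda$-filtered diagram of $E$-algebras, form the colimit of underlying objects in $\ck$, and use that each $(-)^X$ and $(-)^Y$ sends this to the corresponding colimit of sets, so that the operation maps assemble into a well-defined structure on the colimit and the equations persist. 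The care needed here is to check that the resulting structure is genuinely an $E$-algebra and that it has the correct universal property, i.e. that $U_E$ not merely preserves but \emph{creates} these colimits; naturality and the $\lambda$-presentability of all arities make this a diagram chase rather than a substantial difficulty, but it is the crux of the argument.
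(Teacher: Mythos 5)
Your proposal is correct, but it takes a genuinely different route from the paper's. You package everything into a $\lambda$-ary limit sketch: via Gabriel--Ulmer duality $\ck\simeq\operatorname{Lex}_\lambda(\ck_\lambda^{\op},\Set)$, an $E$-algebra becomes a functor on the small category $\ct$ obtained from $\ck_\lambda$ by freely adjoining the operation symbols as new arrows and quotienting by the equations (conditions (4) and (5) of the definition make this a category), whose restriction to $\ck_\lambda^{\op}$ is $\lambda$-continuous; local $\lambda$-presentability is then the sketch theorem, and monadicity and creation of $\lambda$-filtered colimits follow as you describe. This is essentially the Bourke--Garner pretheory viewpoint, which the paper cites as an alternative source for the result. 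The paper instead decomposes the construction inside the 2-category of accessible categories: for a single $(X,Y)$-ary operation, $\Alg(\Omega)$ is the inserter $\Ins(-^X,-^Y)$, hence accessible, and $U_\Omega$ acquires a left adjoint by the adjoint functor theorem, with free algebras on $\lambda$-presentable objects forming a strong generator of $\lambda$-presentables; a general type is handled by a multiple pseudopullback of the one-operation cases, and the equations then cut out a full reflective subcategory closed under $\lambda$-filtered colimits. Both routes ultimately rest on the same toolbox (accessibility of 2-categorical limits, the adjoint functor theorem, and Beck's theorem via creation of absolute coequalizers), but yours treats operations and equations uniformly in one step and directly identifies $\Alg(E)$ with models of the associated pretheory, whereas the paper's stepwise decomposition avoids setting up sketches and duality and reads off the Beck conditions and colimit preservation immediately from the inserter/pullback description of the forgetful functor.

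Two small repairs to your write-up. In Step 1, $A^X=\ck(X,A)$ is a set, not an object of $\ck$, and it need not be ``obtained as a $\lambda$-small limit of representables'': it is simply the value at the node $X$ of the restricted hom-functor, and the $\lambda$-presentability of the arities is used precisely so that $X$ and $Y$ are (up to isomorphism) nodes of the sketch for $\ck$, and so that evaluation there preserves $\lambda$-filtered colimits; you should also fix a representative small $\ck_\lambda$ so that the extended sketch remains small. These are matters of formulation, not gaps in the argument.
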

\begin{proof}
Let $\Omega$ consist of a single $(X,Y)$-ary operation symbol where $X$ and $Y$ are $\lambda$-presentable. Then $\Alg(\Omega)$ is the inserter $\Ins(-^X,-^Y)$ which is an accessible category (see \cite[Theorem 2.72]{AR}). Clearly, $\Alg(\Omega)$ is complete and $U_\Omega$ preserves limits and $\lambda$-directed colimits. Thus $U_\Omega$ has a left adjoint $F$ (see \cite[Theorem 1.66]{AR}). Since $U$ is conservative and faithful, $FZ$, $Z$ $\lambda$-presentable, form a strong generator of $\Alg(\Omega)$ consisting of $\lambda$-presentable objects. Following \cite[Theorem 1.20 and Corollary 2.47]{AR}, $\Alg(\Omega)$ is locally $\lambda$-presentable. Since $U_\Omega$ creates coequalizers of $U_\Omega$-absolute pairs, $U_\Omega$ is monadic.

For a general $\lambda$-ary type, $\Alg(\Omega)$ is a multiple pullback of $U_{\{f\}}$, $f\in\Omega$. Since $U_{\{f\}}$ are transportable, this multiple pullback is a multiple
pseudopullback (see \cite[Proposition 5.1.1]{MP}). Hence $\Alg(\Omega)$ is locally $\lambda$-presentable (see \cite[Theorem 2.72]{AR}). Again, $U_\Omega$ preserves limits, $\lambda$-filtered colimits and creates coequalizers of $U_\Omega$-absolute pairs. Hence it is monadic.

Finally, $\Alg(E)$ forms a full reflective subcategory of $\Alg(\Omega)$ closed under $\lambda$-filtered colimits.  Hence it is locally $\lambda$-presentable (see 
\cite[Corollary 2.48]{AR}). The rest is obvious.
\end{proof}

\begin{rem}
{
\em
We could shorten the proof by using an unpublished \cite{B} where 2.14 shows that the inserters $\Ins(-^X,-^Y)$ are locally $\lambda$-presentable.
}
\end{rem}

\begin{coro}\label{monad1}
Let $\ck$ be a locally $\lambda$-presentable and $E$ be a $\lambda$-ary equational theory over $\ck$. Then the corresponding monad $T_E$ is $\lambda$-ary.
\end{coro}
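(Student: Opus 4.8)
The plan is to read off the result directly from the monadic adjunction supplied by Theorem~\ref{monad0}. Write $F_E\dashv U_E$ for this adjunction, so that the monad $T_E$ corresponding to $E$ is by construction the composite $T_E=U_EF_E\colon\ck\to\ck$ (monadicity of $U_E$ identifies $\Alg(E)$ with the Eilenberg--Moore category of $T_E$). Since preservation of $\lambda$-filtered colimits is closed under composition of functors, it will suffice to treat the two factors $F_E$ and $U_E$ separately.

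First I would observe that $F_E$, being a left adjoint, preserves all colimits, and in particular $\lambda$-filtered ones. The only point that needs a word is that applying $F_E$ does not alter the shape of a diagram: if $D\colon\cd\to\ck$ is $\lambda$-filtered, then $F_ED\colon\cd\to\Alg(E)$ has the same $\lambda$-filtered index category $\cd$ and is therefore again a $\lambda$-filtered diagram. Theorem~\ref{monad0} then guarantees that $U_E$ preserves its colimit.

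Putting the two steps together, for $C=\colim D$ the canonical comparison maps yield
$$
T_EC=U_EF_E(\colim D)=U_E(\colim F_ED)=\colim U_EF_ED=\colim T_ED,
$$
so $T_E$ preserves $\lambda$-filtered colimits and is thus $\lambda$-ary. I do not anticipate any genuine difficulty here: the whole content is the formal fact that the composite of a colimit-preserving functor with a $\lambda$-filtered-colimit-preserving functor preserves $\lambda$-filtered colimits, the one substantive input being the preservation property of $U_E$ already established in Theorem~\ref{monad0}.
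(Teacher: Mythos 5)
Your argument is correct and is exactly the intended derivation: the paper states this corollary without a separate proof precisely because, as you observe, $T_E=U_EF_E$ with the left adjoint $F_E$ preserving all colimits and $U_E$ preserving $\lambda$-filtered colimits by Theorem~\ref{monad0}, so the composite is $\lambda$-ary. No gaps; your side remark that $F_E$ does not change the (\,$\lambda$-filtered\,) shape of the indexing diagram is the only point worth spelling out, and you handled it.
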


The following observation is crucial.

\begin{obs}\label{compare}
{
\em
Let $U:\ca\to\ck$ be a faithful functor, i.e., $\ca$ is \textit{concrete} over $\ck$. For $X,Y\in\ob(\ck)$, let $\Omega^{X,Y}$ consist of natural transformations $U^X\to U^Y$ where $U^X=\ck(X,U-)$. Since the collections $\Omega^{X,Y}$ do not need to be sets, the resulting "type" is not legitimate. Ignoring this, we get terms where $x_f=U^f$ and $\omega\omega'$ is the composition. Observe that $\tilde{\Omega}(X,Y)=\Omega^{X,Y}$.
This yields the "equational theory" $E_U$ and the functor $H_U:\ca\to\Alg(E_U)$ such that $H_U(A)$ is $A$ equipped with components $\omega_A$ as operations.

Assume that $U$ has a left adjoint $F$. Then the natural transformations $\omega:U^X\to U^Y$ correspond to natural transformations $\ca(FX,-)\to\ca(FY,-)$ and thus to morphisms $FY\to FX$. Hence we get a type $t_U$ and an equational theory $E_U$. 

This is due to Linton \cite{L} and it goes back to Lawvere \cite{La}. The fundamental result
of Linton \cite{L} is that if $U$ is monadic then the comparison functor $H_U:\ck^T\to\Alg(E_U)$ from the category of $T$-algebras is an equivalence. Moreover, given a $T$-algebra, elements $a\in A^X$ correspond to morphisms $FX\to A$.

Observe that Linton's result exactly says that the full subcategory consisting of free algebras is dense in the category of algebras. This can be proved as follows. Consider the canonical presentation  
$$ 
	\xymatrix@=4pc{
		&  
		FUFUA\ar@<0.5ex>[r]^{\varepsilon_{FUA}}
		\ar@<-0.5ex>[r]_{FU\varepsilon_A}& FUA  \ar[r]^{\varepsilon_A} & A
	}
	$$
of a $T$-algebra $A$. Let $\varphi_a:FX\to B$ be a cocone from the canonical diagram of free $T$-algebras to $A$; here $a:FX\to A$. Let $\tilde{a}:X\to UA$ be the transpose 
of $a$. There is a unique $t:A\to B$ such that $t\varepsilon_A=\varphi_{\varepsilon_A}$. Since
$$
ta=t\varepsilon_AF\tilde{a}=\varphi_{\varepsilon_A}F\tilde{a}=\varphi_a,
$$
$A$ is a colimit of its canonical diagram.
}
\end{obs}

\begin{theo}\label{monad2}
Let $\ck$ be a locally $\lambda$-presentable category and $T$ be a $\lambda$-ary monad on $\ck$. Then there is a $\lambda$-ary equational theory $E$ such that the concrete categories (over $\ck$) $\ck^T$ and $\Alg(E)$ are equivalent.
\end{theo}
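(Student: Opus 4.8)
The plan is to realise $E$ as Linton's theory from Remark~\ref{compare} cut down to $\lambda$-presentable arities, and to show that passing to this restriction loses no information precisely because $T$ is $\lambda$-ary. Write $U=U^T\colon\ck^T\to\ck$ for the forgetful functor and $F$ for its left adjoint. Since $T$ is $\lambda$-ary, $U$ creates (hence preserves) $\lambda$-filtered colimits, so $\ck^T$ is locally $\lambda$-presentable (see \cite{AR}) and $F$ carries $\lambda$-presentable objects of $\ck$ to $\lambda$-presentable objects of $\ck^T$. Define a $\lambda$-ary type $t$ by setting $\Omega^{X,Y}=\ck^T(FY,FX)$ for $\lambda$-presentable $X,Y$; these are sets because $\ck^T$ is locally presentable, and under the bijection of Remark~\ref{compare} they are exactly the natural transformations $U^X\to U^Y$ with $\lambda$-presentable arities. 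Taking $E$ to consist of all equations between such $\lambda$-ary terms that hold in every $H_U(A)$ yields a $\lambda$-ary equational theory together with the concrete comparison functor $H\colon\ck^T\to\Alg(E)$, $U_E H=U^T$.

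By Remark~\ref{compare} (Linton's theorem) the full-arity comparison functor $H_U\colon\ck^T\to\Alg(E_U)$ is an equivalence of concrete categories. Forgetting the operations of non-$\lambda$-presentable arity gives a concrete functor $R\colon\Alg(E_U)\to\Alg(E)$ with $RH_U=H$, so it suffices to prove that $R$ is a concrete equivalence; equivalently, that every $E$-algebra extends uniquely to an $E_U$-algebra and that $E$-homomorphisms automatically respect all operations.

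The extension is forced by the $\lambda$-ary operations through a colimit computation. Given any object $X$, write $X=\colim_i X_i$ as a $\lambda$-filtered colimit of $\lambda$-presentable objects $X_i$; then $FX=\colim_i FX_i$ is a $\lambda$-filtered colimit in $\ck^T$ and, dually, $A^X=\lim_i A^{X_i}$ for every algebra. For a general operation $\omega\colon FY\to FX$, with $Y=\colim_j Y_j$, each composite $FY_j\to FY\xrightarrow{\ \omega\ }FX$ factors, by $\lambda$-presentability of $FY_j$ in $\ck^T$, through some $FX_i$, producing a $\lambda$-ary operation $\bar\omega_j\in\Omega^{X_i,Y_j}$. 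For an $E$-algebra $C$ this lets one define $\omega_C\colon C^X\to C^Y=\lim_j C^{Y_j}$ from the maps $(\bar\omega_j)_C$ precomposed with the projections $C^X\to C^{X_i}$, and one checks the same recipe recovers $\omega_A$ on every $H_U(A)$. Thus all operations, and all equations of $E_U$, are determined by the $\lambda$-ary fragment.

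The main obstacle is exactly this last verification: showing that the formula for $\omega_C$ is independent of the chosen presentations of $X$ and $Y$ and of the factorizations $\bar\omega_j$, and that the resulting family satisfies all equations of $E_U$. Both follow from the equations already present in $E$ — they encode precisely the naturality and compatibility relations among the $\bar\omega_j$ — so $R$ becomes full, faithful and essentially surjective, hence a concrete equivalence. Composing, $H=RH_U$ is the desired concrete equivalence between $\ck^T$ and $\Alg(E)$, and this is the step where $\lambda$-presentability of the free algebras $FY_j$, i.e.\ the hypothesis that $T$ is $\lambda$-ary, is indispensable.
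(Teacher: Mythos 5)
Your proof is correct and takes essentially the same route as the paper: both define $E$ as the $\lambda$-ary restriction of Linton's theory $E_U$ from Remark~\ref{compare} and show the reduct functor $R\colon\Alg(E_U)\to\Alg(E)$ is an equivalence by factoring each $f\colon FY\to FX$ through the $\lambda$-directed colimits $FX=\colim FX_i$, $FY=\colim FY_j$, using that $F$ preserves $\lambda$-presentable objects because $T$ is $\lambda$-ary. Your explicit reconstruction of $\omega_C$ on a general $E$-algebra via $C^Y=\lim_j C^{Y_j}$, together with the well-definedness check, simply fills in the step the paper compresses into ``this implies that $R$ is an equivalence.''
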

\begin{proof}
Following \ref{compare}, $\ck^T$ is equivalent to $\Alg(E_U)$ where $U:\ck^T\to\ck$ is the forgetful functor. Moreover $(X,Y)$-ary operations of type $t_U$ correspond to morphisms 
$FY\to FX$. Let $E$ be the subtheory of $E_U$ where operations are $\lambda$-ary, i.e., $X$ and $Y$ are $\lambda$-presentable. We get the reduct functor $R:\Alg(E_U)\to\Alg(E)$
forgetting operations which are not $\lambda$-ary.

Given $X,Y\in\ob(\ck)$, we express them as $\lambda$-directed colimits $(x_i:X_i\to X)_{i\in I}$ and $(y_j:Y_j\to Y)_{j\in J}$ of $\lambda$-presentable objects in $\ck$. This yields $\lambda$-directed colimits $(Fx_i:FX_i\to FX)_{i\in I}$ and $(Fy_j:FY_j\to FY)_{j\in J}$ in $\ck^T$. Since $U$ preserves $\lambda$-directed colimits,
$F$ preserves $\lambda$-presentable objects. Consequently, every $f:FY\to FX$ is a $\lambda$-directed colimit of $f_{ji}:FY_{j} \to FX_{i_j}$. Indeed, given $j\in J$ and $i\in I$, since $FY_j$ is $\lambda$-presentable, there is $i_j\geq i$
such that $fFy_j=F(x_{i_j})f_{ij}$. This implies that $R$ is an equivalence.

Indeed, let $\omega$ be an $(X,Y)$-ary operation corresponding to $f:FY\to FX$
and $\omega_{ij}$ be $(X_{i_j},Y_j)$-ary operations corresponding to 
$f_{ij}:FY_j\to FX_{i_j}$. We have $A^Y=\lim_j A^{Y_j}$. Since
$$
(\omega_{i j})_A A^{x_{i_j}}: A^X\to A^{Y_j}
$$
form a cone, there is a unique $\omega_A:A^X\to A^Y$ such that 
$$
A^{y_j}\omega_A=(\omega_{i j})_A A^{x_{i_j}}.
$$
This clearly makes $A$ an $E_U$-algebra.
\end{proof}

\begin{rem}
{
\em
(1) We have got a one-to-one correspondence between $\lambda$-monads on $\ck$ and $\lambda$-ary equational theories over $\ck$. As explained in the introduction, this result follows from \cite{BG}, Corollary 21 and Proposition 23.

(2) \ref{monad2} and \ref{monad0} imply that $\ck^T$ is locally $\lambda$-presentable, which is already in \cite{GU}.
}
\end{rem}

\begin{defi}\label{preservation0}
{
\em
We say that a type $t$ over $\ck$ is $(\lambda',\lambda)$-\textit{ary} if all arities $X$ are $\lambda'$-presentable and all arities $Y$ of its operation symbols are $\lambda$-presentable. 
}
\end{defi}

\begin{propo}\label{preservation}
Let $\ck$ be a locally $\lambda$-presentable category, $\lambda'\leq\lambda$ be a regular cardinal and $E$ be a $(\lambda',\lambda)$-ary equational theory over $\ck$. Then $U_E$ preserves $\lambda'$-directed colimits. Hence the corresponding monad is $\lambda'$-ary.
\end{propo}
\begin{proof}
Consider a $\lambda$-directed diagram $(a_{ij}:A_i\to A_j)_{i\leq j\in I}$ 
in $\Alg(E)$. Take $U_EA=\colim U_EA_i$ with a colimit cocone $a_i:U_EA_i\to U_EA$ and an $(X,Y)$-ary $t$-operation $\omega$. We have a bijection
$u:\colim(U_EA_i)^X\to(\colim U_EA_i)^X$ and a mapping $v:\colim(U_EA_i)^Y\to(\colim U_EA_i)^Y$. We define $\omega_A:A^X\to A^Y$ as the composition $v\cdot\colim\omega_{A_i}\cdot u^{-1}$.
It is easy to see that we get a colimit $(\bar{a}_i:A_i\to A)$ in $\Alg(E)$ with $U\bar{a}_i=a_i$ for every $i\in I$.
\end{proof}

\begin{exams}\label{ex}
{
\em
(1) Over $\Set$, $(X,Y)$-ary operations coincide with $Y$-tuples of usual $X$-ary operations, i.e. $(X,1)$-ary in our terminology. Thus we get the well-known correspondence between Lawvere theories and finitary monads in $\Set$ due to \cite{L1}.

(2)  Since $\Pos$ is locally finitely presentable, \ref{monad1} and \ref{monad2} imply that finitary monads on $\Pos$ correspond to finitary inequational theories, which was recently proved in \cite{AFMS}. 

(3) Since $\Met$ is locally $\aleph_1$-presentable, monads preserving $\aleph_1$-directed colimits on $\Met$ correspond to $\aleph_1$-ary equational theories.  
}
\end{exams}

\begin{rem}\label{reduction}
{
\em
Examples \ref{ex} can be extended. Let $E$ be a $\lambda$-ary equational theory over a locally $\lambda$-presentable category $\ck$ and $\ca\subseteq\ck_\lambda$ be a dense subcategory consisting of (some) $\lambda$-presentable objects. Let $E'$ be a subtheory of $E$ consisting of $(X,Y)$-ary operations where $Y\in\ca$. Then the reduct functor $\Alg(E)\to\Alg(E')$ is an equivalence.

It suffices to express $Y$ as a colimit $\delta_d:Dd\to Y$ of its canonical diagram $D:\cd\to\ca$.
Then, for an $E'$-algebra $A$ and an $(X,Y)$-ary operation $\omega$ of $E$, 
$$
A^Y=A^{\colim D}\cong\lim A^D
$$ 
with projections $A^{\delta_d}:A^Y\to A^{Dd}$. This yields a unique $\omega_A:A^X\to A^Y$ making $A$ a $E$-algebra.

In \ref{ex}(1), one takes $\ca=\{1\}$, in \ref{ex}(2), $\ca=\{1,2\}$ and, 
in \ref{ex}(3), $\ca=\{1,2_\eps\}$ with $\eps>0$ .
}
\end{rem}

\begin{defi}
{
\em
Let $\ck$ be a $\cm$-locally $\lambda$-generated category in the sense of \cite{DR}. We say that a type $t$ over $\ck$ is $\lambda$-\textit{ary} if all arities $X$ and $Y$ of its operation symbols are $\lambda$-generated w.r.t. $\cm$. A theory is $\lambda$-ary if its type is $\lambda$-ary. 
}
\end{defi}

\begin{theo}\label{monad3}
Let $\ck$ be $\cm$-locally $\lambda$-generated and $E$ be a $\lambda$-ary equational theory over $\ck$. Then $\Alg(E)$ is locally $\lambda$-generated and $U_E$ is both monadic and a morphism of locally $\lambda$-generated categories.
\end{theo}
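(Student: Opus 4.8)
The plan is to follow the structure of the proof of Theorem~\ref{monad0}, replacing "locally $\lambda$-presentable" by "$\cm$-locally $\lambda$-generated" throughout, and the results of \cite{AR} on accessible categories by the corresponding results of \cite{DR} on locally generated categories. First I would treat the case of a single $(X,Y)$-ary operation symbol, where $X$ and $Y$ are $\lambda$-generated w.r.t.\ $\cm$. Here $\Alg(\Omega)=\Ins(-^X,-^Y)$ is an inserter. The key point is that the hom-functors $(-)^X=\ck(X,-)$ and $(-)^Y=\ck(Y,-)$ preserve $\lambda$-directed colimits of $\cm$-morphisms, by the very definition of $X,Y$ being $\lambda$-generated w.r.t.\ $\cm$. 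Thus $U_\Omega$ preserves and (since inserters are created from the base) reflects such colimits, making it a morphism of locally $\lambda$-generated categories. As in \ref{monad0}, $U_\Omega$ preserves limits and so has a left adjoint $F$, and $FZ$ for $Z$ ranging over the generators of $\ck$ yields a generating set of $\lambda$-generated objects of $\Alg(\Omega)$.

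The main obstacle I anticipate is the analogue of the sentence ``$\Alg(\Omega)$ is locally $\lambda$-presentable'' in the generated setting. In the presentable case this rests on the machinery of \cite{AR} (1.20, 2.47, 2.72) about inserters and multiple pseudopullbacks being accessible; in the $\cm$-locally $\lambda$-generated world one must invoke the parallel closure properties established in \cite{DR}, namely that inserters and multiple (pseudo)pullbacks of morphisms of $\cm$-locally $\lambda$-generated categories are again $\cm'$-locally $\lambda$-generated for a suitable induced factorization system $\cm'$ on the comma/inserter category. Identifying the correct class $\cm'$ (the $\cm$-morphisms lifted to $\Alg(\Omega)$, i.e.\ homomorphisms whose underlying $\ck$-morphism lies in $\cm$) and checking it is a $\lambda$-convenient factorization system is the step that requires care, since this is exactly where the generated theory departs from the presentable one.

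For a general $\lambda$-ary type, I would realize $\Alg(\Omega)$ as the multiple pullback of the faithful transportable functors $U_{\{f\}}$ over $\ck$, $f\in\Omega$; by transportability this is a multiple pseudopullback, which by the closure results of \cite{DR} is again $\cm$-locally $\lambda$-generated, and $U_\Omega$ preserves limits and $\lambda$-directed colimits of $\cm$-morphisms and creates coequalizers of $U_\Omega$-absolute pairs, hence is monadic. Finally, imposing the equations $E$ cuts out a full subcategory $\Alg(E)\subseteq\Alg(\Omega)$ that is reflective and closed under $\lambda$-directed colimits of $\cm$-morphisms; the reflectivity together with closure under these colimits gives that $\Alg(E)$ is $\cm$-locally $\lambda$-generated, by the generated analogue of \cite{AR} Corollary~2.48. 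The forgetful functor $U_E$ remains monadic, as a composite of the monadic $U_\Omega$ with a reflective inclusion closed under the relevant colimits, and it is a morphism of locally $\lambda$-generated categories because it preserves $\lambda$-directed colimits of $\cm$-morphisms and sends $\cm$-homomorphisms to $\cm$-morphisms.
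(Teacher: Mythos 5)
Your proposal correctly identifies where the difficulty lies, but it resolves that difficulty by citing results that do not exist: the ``parallel closure properties established in \cite{DR}'' for inserters and multiple pseudopullbacks of $\cm$-locally $\lambda$-generated categories, and the ``generated analogue of \cite{AR} Corollary~2.48'' for reflective subcategories, are not in \cite{DR}, and proving them is essentially the entire content of the theorem. Moreover, the mechanism that drives the presentable case genuinely breaks at level $\lambda$: in \ref{monad0}, accessibility of $\Ins(-^X,-^Y)$ comes from \cite{AR}~2.72 because the hom-functors $(-)^X,(-)^Y$ of $\lambda$-presentable objects are $\lambda$-accessible; when $X,Y$ are merely $\lambda$-generated w.r.t.\ $\cm$, these functors preserve only $\lambda$-directed colimits of $\cm$-morphisms, so the inserter is not known to be $\lambda$-accessible, and your step ``$U_\Omega$ preserves limits and so has a left adjoint $F$'' has no justification --- in \ref{monad0} the adjoint came from \cite{AR}~1.66, which requires exactly the accessibility you have lost. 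Your strategy of ``replacing locally $\lambda$-presentable by $\cm$-locally $\lambda$-generated throughout'' therefore stalls at the very first step of the induction on the type.

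The paper avoids all of this by a different decomposition. Since every $\cm$-locally $\lambda$-generated category is locally $\mu$-presentable for some $\mu\geq\lambda$, the theory $E$ is $\mu$-ary in the presentable sense, so Theorem~\ref{monad0} applies \emph{verbatim}: $\Alg(E)$ is locally $\mu$-presentable and $U_E$ is monadic, with left adjoint $F$, and no generated analogues of the inserter or pseudopullback lemmas are ever needed. The only new content is $\lambda$-generatedness, which is obtained not by closure properties of the construction of $\Alg(E)$ but by transporting the factorization system along the now-available monadic adjunction: one puts $\cm'=U_E^{-1}(\cm)$ (your identification of this class is correct) and shows, as in the proof of \cite{DR}~2.23, that $(F(\ce),\cm')$ is $\lambda$-convenient; then $F$ carries a strong generator $\cg$ of $\lambda$-generated objects to a strong generator $F(\cg)$ of $\Alg(E)$, whose objects are $\lambda$-generated w.r.t.\ $\cm'$ because $U_E$ sends $\lambda$-directed colimits of $\cm'$-morphisms to $\lambda$-directed colimits of $\cm$-morphisms (your observation about the arities is precisely what makes this true), and \cite{DR}~2.22 concludes. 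Note that if you tried to supply your missing closure lemmas honestly, you would need the left adjoint to build the left class of the factorization system and the generator, i.e.\ you would be forced through the same $\mu$-presentable detour and the same transfer argument --- your plan, once completed, collapses into the paper's proof.
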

\begin{proof}
Following \cite{DR}, $\ck$ is locally $\mu$-presentable for some $\mu\geq\lambda$. The theory $E$ is then $\mu$-ary. Hence $\Alg(E)$ is locally $\mu$-presentable and $U_E$ is monadic (see \ref{monad0}). 

Since $\ck$ is $\cm$-locally $\lambda$-generated, $\ck$ is equipped with a $\lambda$-convenient factorization system $(\ce,\cm)$. Put $\cm'=U_E^{-1}(\cm)$. Like in the proof of \cite[Theorem 2.23]{DR}, $(\colim F(\ce),\cm')$ is a $\lambda$-convenient factorization system in $\Alg(T)$; here $F$ is left adjoint to $U_E$. Since $U_E$ is faithful and conservative, $F$ maps a strong generator $\cg$ of $\ck$ consisting of $\lambda$-generated objects w.r.t. $\cm$ to a strong generator $F(\cg)$ in $\Alg(T)$. Since $U_E$ sends $\lambda$-directed colimits of $\cm'$-morphisms to $\lambda$-directed colimits of $\cm$-morphisms, objects of $F(\cg)$ are $\lambda$-generated w.r.t. $\cm'$ (see the proof of 
\cite[Lemma 3.11]{DR}). Following \cite[Theorem 2.22]{DR}, $\Alg(T)$ is $\cm'$-locally $\lambda$-generated. Clearly, $U_E$ is a morphisms of locally $\lambda$-generated categories. 
\end{proof}

\begin{rem}
{
\em
The resulting monad $T_E=U_EF$ is $\mu$-ary. But we do not know that it preserves $\lambda$-directed colimits of $\cm$-morphisms. This means that it sends $\lambda$-directed colimits of $\cm$-morphisms to $\lambda$-directed colimits. For this, we would need that $F$ sends $\cm$-morphisms to $\cm'$-morphisms. Then $T_E$ would also preserve $\cm$-morphisms. 
}
\end{rem}

\begin{theo}\label{monad4} 
Let $\ck$ be $\cm$-locally $\lambda$-generated and locally $\mu$-presentable for $\lambda\leq\mu$. Let $T$ be a $\mu$-ary monad on $\ck$ preserving $\cm$-morphisms and $\lambda$-directed colimits of $\cm$-morphisms. Then there is a $\lambda$-ary equational theory $E$
such that $\ck^T$ and $\Alg(E)$ are equivalent (as concrete categories over $\ck$).
\end{theo}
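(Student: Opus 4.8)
The plan is to follow the strategy of \ref{monad2} (hence Linton's density argument recorded in \ref{compare}), while keeping track of the two factorization systems. By \ref{compare}, $\ck^T$ is equivalent, as a concrete category over $\ck$, to $\Alg(E_U)$, where $U:\ck^T\to\ck$ is the forgetful functor, $F$ is its left adjoint, and the $(X,Y)$-ary operations of the type $t_U$ are precisely the morphisms $FY\to FX$ in $\ck^T$. I would then let $E$ be the subtheory of $E_U$ consisting of those operations whose arities $X$ and $Y$ are $\lambda$-generated w.r.t. $\cm$ (after passing to a representative small subcategory of such objects, so that $E$ is a legitimate type); this $E$ is $\lambda$-ary in the required sense and comes with a reduct functor $R:\Alg(E_U)\to\Alg(E)$. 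The whole problem reduces to showing that $R$ is an equivalence, for then composing with the Linton equivalence gives a concrete equivalence $\ck^T\simeq\Alg(E)$.

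As in the proof of \ref{monad3}, I would equip $\ck^T$ with the factorization system $(F(\ce),\cm')$, where $\cm'=U_E^{-1}(\cm)$. The two hypotheses on $T$ then supply exactly the two facts about $F$ that are needed. First, since $T$ preserves $\cm$-morphisms, any $\cm$-morphism $m$ satisfies $U_EFm=Tm\in\cm$, so $Fm\in\cm'$; thus $F$ carries $\cm$-morphisms to $\cm'$-morphisms. Second, since $T$ preserves $\lambda$-directed colimits of $\cm$-morphisms and $U_E$ is monadic, $U_E$ creates $\lambda$-directed colimits of $\cm'$-morphisms; combined with the adjunction isomorphism $\ck^T(FZ,-)\cong\ck(Z,U_E-)$ this shows that $F$ sends objects $\lambda$-generated w.r.t. $\cm$ to objects $\lambda$-generated w.r.t. $\cm'$.

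With these in hand I would run the colimit argument. Because $\ck$ is $\cm$-locally $\lambda$-generated, write $X=\colim_i X_i$ and $Y=\colim_j Y_j$ as $\lambda$-directed colimits of $\lambda$-generated objects along $\cm$-morphisms. Applying the cocontinuous $F$, and using that it sends $\cm$-morphisms to $\cm'$-morphisms and $\lambda$-generated objects to $\lambda$-generated ones, yields $\lambda$-directed colimits $FX=\colim_i FX_i$ and $FY=\colim_j FY_j$ along $\cm'$-morphisms, with every $FX_i$, $FY_j$ $\lambda$-generated w.r.t. $\cm'$. Given an arbitrary operation $f\colon FY\to FX$, each composite $FY_j\to FY\xrightarrow{f}FX$ factors through some $FX_{i'}$, since $FY_j$ is $\lambda$-generated w.r.t. $\cm'$ and $FX$ is a $\lambda$-directed colimit of $\cm'$-morphisms; assembling the resulting $f_{ji'}\colon FY_j\to FX_{i'}$ over the colimit recovers $f$. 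Hence every operation of $E_U$ is determined by operations of $E$, which is precisely what makes $R$ an equivalence.

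The main obstacle is the middle step: establishing that $F$ preserves $\lambda$-generatedness relative to the two factorization systems. This is exactly the property flagged as missing in the remark following \ref{monad3}, and it is here that both hypotheses are indispensable. The delicate point is the creation of $\lambda$-directed colimits of $\cm'$-morphisms by $U_E$: monadicity only guarantees that $U_E$ creates those colimits preserved by $T$, so preservation of $\lambda$-directed colimits of $\cm$-morphisms by $T$ must be invoked for precisely this, while preservation of $\cm$-morphisms by $T$ is what keeps the $U_E$-image diagram inside $\cm$, so that $\lambda$-generatedness can legitimately be tested against it.
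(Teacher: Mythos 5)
Your proposal is correct and takes essentially the same route as the paper: the paper's proof is exactly the argument of \ref{monad2} with $X$ and $Y$ decomposed as $\lambda$-directed colimits of $\lambda$-generated objects along $\cm$-morphisms, so that $F$ carries these to $\lambda$-directed colimits of $\cm'$-morphisms with $FY_j$ $\lambda$-generated w.r.t.\ $\cm'$, and the reduct functor is an equivalence. Your explicit justifications --- that $T$ preserving $\cm$-morphisms gives $Fm\in\cm'$, and that $T$ preserving $\lambda$-directed colimits of $\cm$-morphisms (together with $\cm$-preservation keeping the image diagrams inside $\cm$) lets $U$ create the relevant colimits so that the $FY_j$ are $\lambda$-generated --- merely fill in details the paper leaves implicit.
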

\begin{proof}
The proof is analogous to that of \ref{monad2}. Only, given $X,Y\in\ob(\ck)$, we express them as
$\lambda$-directed colimits $(x_i:X_i\to X)_{i\in I}$ and $(y_j:Y_j\to Y)_{j\in J}$ of $\lambda$-generated objects w.r.t. $\cm$ and $\cm$-morphisms in $\ck$. Then $(Fx_i:FX_i\to FX)_{i\in I}$
and $(Fy_j:FY_j\to FY)_{j\in j}$ are $\lambda$-directed colimits of $\cm'$-morphisms. Since $FY_j$ are $\lambda$-generated w.r.t. $\cm'$, $f:FY\to FX$ is a $\lambda$-directed colimit of $f_i:FY_{j_i}\to X_i$.
\end{proof}

\begin{rem}
{
\em
Since $\Met$ is isometry-locally finitely generated, every $\aleph_1$-ary monad $T:\Met\to\Met$ preserving isometries and $\aleph_0$-directed colimits of isometries is given by an $\aleph_0$-ary equational theory over $\Met$. This means that arities are finite metric spaces. Conversely, every $\aleph_0$-ary equational theory yields an $\aleph_1$-ary monad $T:\Met\to\Met$. But we do not know whether it also preserves isometries and $\aleph_0$-directed colimits of isometries. The next example shows that
it does not need to happen.
}
\end{rem}

\begin{exam}\label{counter}
{
\em
Let $X$ be the metric space having points $a,b,c$ with distances $(a,b)=d(b,c)=1$ and $d(a,c)=2$
and $Y=2_1$ be the metric space with two points $0,1$ having the distance $1$. Let $f,g:Y\to X$ be given as follows: $f(0)=a, f(1)=b=g(0)$ and $g(1)=c$. Let $E$ be given by $x_f=x_g$. This equation
corresponds to the formula
$$
(\forall x,y,z) (d(x,y)\leq 1\wedge d(x,z)\leq 1)\Rightarrow y=z),
$$
which is equivalent to
$$
(\forall x,y)(d(x,y)\leq 1\Rightarrow x=y).
$$
Hence $E$-algebras are metric spaces having distances of distinct points $>1$ and the monad $T_E$ is given by the reflection to this reflective subcategory. Consider the isometry $m:2_2\to X$, where $2_2$ has two points having the distance $2$. Then $T_E(m)$ is the constant mapping. Hence
$T_E$ does not preserve isometries although it is given by a finitary equational theory. Moreover,
$T_E$ preserves directed colimits of isometries but it does not preserve directed colimits.
}
\end{exam}

\section{Enriched equational theories}
Let $\cv$ be a symmetric monoidal closed category and $\ck$ a $\cv$-category. Given  objects $A$ and $X$ in $\ck$, we denote the $\cv$-object $\ck(X,A)$ as $A^X$. 
Similarly, $K^f=\ck(f,K)$ and $h^X=\ck(X,h)$ are morphisms in $\cv$. Let $\ck_0$ be the underlying category of $\ck$.  

At first, we will define enriched algebras over unenriched equational theories from \ref{theory}.

\begin{defi}\label{ealg}
{
\em
Let $t$ be a type over $\ck_0$. A \textit{$\cv$-algebra} of type $t$ is an object $A$ of $\ck_0$ equipped with morphisms $\omega_A:A^X\to A^Y$ in $\cv_0$ for every $(X,Y)$-ary operation symbol of $t$. Terms are interpreted in $A$ as in \ref{theory}, just the meaning of $A^f$ is changed. 

A $\cv$-algebra $A$ \textit{satisfies} an equation $p=q$ if $p_A=q_A$. It satisfies a theory $E$ if it satisfies all equations of $E$.

A \textit{homomorphism} $h:A\to B$ of $\cv$-algebras of type $t$ are morphisms $h:A\to B$ such that $h^Y\omega_A=\omega_B h^X$ for every $\omega\in\Omega^{X,Y}$.

$\Alg(E)$ will denote the category of $\cv$-algebras of $E$ and $U_E:\Alg(E)\to\ck_0$ will be the forgetful functor.
}
\end{defi}

\begin{rem}
{
\em
A $\cv$-algebra $A$ is equipped with a functor $\tilde{\Omega}\to\cv_0$ whose composition with 
$x_-$ is $\ck_0(-,A)$.
}
\end{rem}

Enriched equational theories are defined as follows (they correspond 
to $\cv$-pretheories of \cite{BG}).

\begin{defi}[\cite{BG}]
{
\em
A $\cv$-\textit{equational theory} over $\ck$ is a $\cv$-category $\Theta$ together with 
an identity-on-objects  $\cv$-functor $x_-:\ca^{\op}\to\Theta$ where $\ca$ is a full subcategory of $\ck$. A $\Theta$-algebra is an object $A$ of $\ck$ together with 
a $\cv$-functor $\hat{A}:\Theta\to\cv$ whose composition with $x_-$ is $\ck(-,A)$. A homomorphism of $\Theta$-algebras $(A,\hat{A})\to (B,\hat{B})$ is a morphism $h:A\to B$  
 together with a $\cv$-natural transformation $\hat{h}:\hat{A}\to\hat{B}$ such that
$\hat{h}J$ is $\ck(-,h)$ restricted on $\ca^{\op}$. $\Alg(\Theta)$ will denote the $\cv$-category of $\Theta$-algebras and $U_\Theta:\Alg(\Theta)\to\cv$ will be the forgetful $\cv$-functor. 
}
\end{defi}

\begin{rem}
{
\em
A $\cv$-equational theory $\Theta$ over $\ck$ determines an equational theory $\Theta_0$ over $\ck_0$ and every $\Theta$-algebra is a $\cv$-algebra of $\Theta_0$.
 
We will show that, over $\Pos$ or $\Met$, enriched equational theories can be reduced
to unenriched ones having the same enriched algebras.
}
\end{rem}

$\cv$ is locally $\lambda$-presentable as a closed category if it is locally
$\lambda$-presentable, the tensor unit $I$ is $\lambda$-presentable and the tensor product of two $\lambda$-presentable objects is also $\lambda$-presentable. An object $X$  
of a $\cv$-category  $\ck$ is $\lambda$-presentable (in the enriched sense) if $\ck(X,-):\ck\to\cv$ preserves $\lambda$-directed colimits. Since the set-valued hom-functor $\cv(X,-)$ is the composition of the $\cv$-hom-functor $\cv(X,-)$ and the set-valued hom-functor $\cv(I,-)$, $\lambda$-presentability in the enriched sense implies $\lambda$-presentability in the ordinary sense. If $\cv(I,-):\cv\to\Set$ creates $\lambda$-directed colimits, the both concepts are equivalent.

\begin{defi}
{
\em 
We say that a type $t$ over $\ck$ is $\lambda$-\textit{ary} if all arities $X$ and $Y$ of its operation symbols are $\lambda$-presentable in $\cv$. A theory is $\lambda$-ary if its type is $\lambda$-ary. Similarly, a $\lambda$-ary $\cv$-equational theory is a 
$\cv$-category $\Theta$ together with an identity-on-objects $\cv$-functor $x_-:\ck_\lambda^{\op}\to\Theta$. 
}
\end{defi}

\begin{theo}[\cite{BG}]\label{emonad0}
Let $\cv$ be a locally $\lambda$-presentable as a closed category, $\ck$ a locally $\lambda$-presentable $\cv$-category and $\Theta$ be a $\lambda$-ary $\cv$-equational theory over $\ck$. Then $\Alg(\Theta)$ is a locally $\lambda$-presentable $\cv$-category and $U_E$ is monadic and preserves $\lambda$-directed colimits. Hence the corresponding $\cv$-monad $T_E$ on $\ck$ is $\lambda$-ary.
\end{theo}
 
\begin{obs}\label{ecompare}
{
\em
Let $U:\ca\to\cv$ be a faithful $\cv$-functor, i.e., $\ca$ is \textit{$\cv$-concrete}. For
$X,Y\in\ob(\cv)$, let $\Omega^{X,Y}$ consist of natural transformations $U^X\to U^Y$ where
$U^X=\cv(X,U-)$. Like in \ref{compare}, we get a $\cv$-equational theory $E_U$ and the $\cv$-functor $H_U:\ca\to\Alg(E_U)$ such that $H_U(A)$ is $A$ equipped with components $\omega_A$ as operations.

Assume that $U$ has a left $\cv$-adjoint $F$. Then $\cv$-natural transformations $\omega:U^X\to U^Y$ correspond to $\cv$-natural transformations $\ca(FX,-)\to\ca(FY,-)$ and thus to morphisms $FY\to FX$. Hence we get a type $t_U$ and a $\cv$-equational theory $E_U$. Again,
the comparison $\cv$-functor $H_U:\ck^T\to\Alg(T_U)$ is an equivalence of $\cv$-categories. Moreover, given a $T$-algebra, elements $a\in A^X$ correspond to morphisms $FX\to A$. 

This exactly says that the full subcategory consisting of free algebras is $\cv$-dense in the category of algebras, which follows from \cite[Theorem 5.1]{K} because $\ck^T$ is cotensored. 
}
\end{obs}

\begin{theo}[\cite{BG}]\label{emonad2}
Let $\cv$ be a locally $\lambda$-presentable as a closed category, $\ck$ be a locally $\lambda$-presentable $\cv$-category and $T$ be a $\lambda$-ary $\cv$-monad on $\ck$. Then there is a $\lambda$-ary $\cv$-equational theory $E$ over $\cv$ such that the $\cv$-concrete categories $\ck^T$ and $\Alg(E)$ are equivalent.
\end{theo}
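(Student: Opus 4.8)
The plan is to mirror the proof of the unenriched Theorem \ref{monad2}, enriching every step to the $\cv$-level. First I would invoke Remark \ref{ecompare}: since $T$ is a $\cv$-monad on $\ck$, the forgetful $\cv$-functor $U\colon\ck^T\to\ck$ is monadic and has a left $\cv$-adjoint $F$, so Linton's enriched comparison gives that $\ck^T$ is equivalent (as a $\cv$-concrete category over $\ck$) to $\Alg(E_U)$, where $E_U$ is the full enriched equational theory whose $(X,Y)$-ary operations are the $\cv$-natural transformations $U^X\to U^Y$, equivalently the morphisms $FY\to FX$ in $\ck^T$. This gives the big (non-$\lambda$-ary) theory; the task is to cut it down to a $\lambda$-ary subtheory without losing any algebraic structure.

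Next I would let $E$ be the subtheory of $E_U$ consisting of those $(X,Y)$-ary operations with $X$ and $Y$ both $\lambda$-presentable (in the enriched sense), and consider the reduct $\cv$-functor $R\colon\Alg(E_U)\to\Alg(E)$. The goal is to show $R$ is an equivalence of $\cv$-categories, which combined with the previous paragraph finishes the proof. The key density input is that $F$ preserves $\lambda$-presentable objects: because $T$ is $\lambda$-ary, $U$ preserves $\lambda$-directed colimits, and by the adjunction $\ck^T(FX,-)\cong\ck(X,U-)$ preserves $\lambda$-directed colimits whenever $X$ is $\lambda$-presentable. I would then write arbitrary $X,Y\in\ob(\ck)$ as $\lambda$-directed colimits $(x_i\colon X_i\to X)_{i\in I}$ and $(y_j\colon Y_j\to Y)_{j\in J}$ of $\lambda$-presentable objects, apply $F$ to obtain $\lambda$-directed colimits $(Fx_i\colon FX_i\to FX)$ and $(Fy_j\colon FY_j\to FY)$ in $\ck^T$, and argue that every morphism $f\colon FY\to FX$ is canonically a $\lambda$-directed colimit of morphisms $f_{ji}\colon FY_j\to FX_{i'}$ between $\lambda$-presentable objects; this exhibits each general operation of $E_U$ as assembled from operations already present in $E$, so $R$ is fully faithful and essentially surjective.

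The one genuinely enriched subtlety, and the step I expect to be the main obstacle, is that the equivalence must be checked at the level of $\cv$-categories rather than mere ordinary categories: I need $R$ to be a $\cv$-equivalence, so the bijections used in the unenriched argument must be upgraded to isomorphisms in $\cv$ of the relevant hom-objects. Here I would rely on the fact recorded in Remark \ref{ecompare}, namely that $\ck^T$ is cotensored and hence enriched density in $\ck^T$ reduces to ordinary density (via \cite{K} 5.1); this lets me transport the colimit/density reasoning from the ordinary setting while guaranteeing the enriched comparison $\cv$-functor $H_U$ is a $\cv$-equivalence. The arities manipulation—choosing $i'\geq i$ so that $fFy_j$ factors through $F(x_{i'})$ using $\lambda$-presentability of $FY_j$—is then formally identical to the proof of \ref{monad2}, and the hypothesis that $\cv$ be locally $\lambda$-presentable as a closed category is exactly what ensures these enriched $\lambda$-directed colimits behave well and that $\Alg(E)$ lands in the right category by Theorem \ref{emonad0}.
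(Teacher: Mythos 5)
Your proposal is correct and follows exactly the route the paper takes: the paper's proof of Theorem~\ref{emonad2} is literally ``Using \ref{ecompare}, the proof is analogous to that of \ref{monad2},'' i.e., the enriched Linton comparison from Remark~\ref{ecompare} (with the cotensoring observation reducing $\cv$-density to ordinary density) followed by the same reduct argument, cutting $E_U$ down to the $\lambda$-ary subtheory and expressing each $f\colon FY\to FX$ as a $\lambda$-directed colimit of morphisms between $\lambda$-presentable free algebras. Your treatment of the enriched subtlety and of $F$ preserving $\lambda$-presentables matches the paper's intent, so nothing is missing.
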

Using \ref{ecompare}, the proof is analogous to that of \ref{monad2}.  

\begin{rem}
{
\em
Let $\cv$ be locally $\lambda$-presentable as a closed category. Then we have a one-to-one correspondence between $\lambda$-ary $\cv$-monads on $\cv$ and $\lambda$-ary $\cv$-equational theories over $\cv$. This was proved for $\lambda=\aleph_0$ in \cite{P} and the passage from $\aleph_0$ to $\lambda$ does not create any difficulties. This also follows from the more general result 7.7 in \cite{LaR} and, of course, from \cite{BG}, Corollary 21 and Proposition 23.
}
\end{rem}

\begin{rem}\label{epreservation}
{
\em
Like in \ref{preservation}, if $E$ is $(\lambda,\lambda')$-ary, $\lambda\leq\lambda'$, then
$\Alg(E)$ is locally $\lambda'$-presentable and $U_E$ preserves $\lambda$-directed colimits.
}
\end{rem}

\begin{exams}
{
\em
(1) The category $\Pos$ is locally finitely presentable as a (cartesian) closed category. Hence finitary enriched monads on $\Pos$ correspond to finitary enriched equational theories over $\Pos$. Since $\cv(I,-):\Pos\to\Set$ creates directed colimits, finitely presentable objects in the enriched sense and in the ordinary sense coincide and are equal to finite posets. Given a type $t$ over $\Pos$ and an enriched  $t$-algebra $A$
in the sense of \ref{ealg}, then $A^X$ is the set of monotone mapping $a:X\to A$. But $\omega_A:A^X\to A^Y$ is a monotone mapping for every term $\omega$. Like in 
\ref{first}(1), we can reduce $(X,Y)$-ary operations to $X$-ary operations but we have to replace equations by inequation. In the terminology of \cite{AFMS}, the resulting algebras are \textit{coherent}, i.e., operations $A^X\to A$ are monotone mappings. In this way, an equational theory over $\Pos$ can be extended to an enriched equational
theory over $\Pos$ having the same enriched algebras. 

As in \ref{first}(1) an inequation $p\leq q$ of $X$-ary terms is expressed by factorizing  the $(X,2_0)$-ary term $(p,q)$ through a $(X,2)$-ary operation $r$ as $(p,q)=x_ur$
where $u:2_0\to 2$ is the identity map from the two-element antichain $2_0$ to 
the two-element chain $2$.
Hence we get that finitary enriched monads on $\Pos$ correspond to finitary inequational theories for coherent algebras, which was proved in \cite{AFMS}.
 
In $\Pos$, finite products commute with reflexive coinserters (see \cite{Bou}). Every finite poset is a reflexive coinserter of finite discrete posets and hom-functors $\Pos(A,-):\Pos\to\Pos$ of finite discrete posets $A$
preserve reflexive coinserters. An argument analogous to \ref{preservation} implies that the $\Pos$-monad $T_E$ given by an enriched finitary equational theory over $\Pos$ with $(X,Y)$-ary operations where $X$ is discrete preserves not only filtered colimits but also reflexive coinserters. \cite{ADV} showed that, conversely, every enriched monad $T$ on $\Pos$ preserving filtered colimits and reflexive coinserters is given by such an enriched equational theory. In our setting, this can be proved as follows.

Following \ref{emonad2}, $\ck^T$ is equivalent to $\Alg(E_U)$ where $E_U$ is a finitary
enriched equational theory whose $(X,Y)$-ary operations correspond to morphisms 
$f:FY\to FX$; here $U:\ck^T\to\Pos$ is the forgetful functor and $F$ is its enriched left adjoint. Express $Y$ and $X$ as reflexive coinserters of finite discrete posets
$$ 
	\xymatrix@=4pc{
		&  
	Y_1	\ar@<0.5ex>[r]^{p_0}
		\ar@<-0.5ex>[r]_{p_1}& Y_0  \ar[r]^{p} & Y
	}
	$$
and
$$ 
	\xymatrix@=4pc{
		&  
	X_1	\ar@<0.5ex>[r]^{q_0}
		\ar@<-0.5ex>[r]_{q_1}& X_0  \ar[r]^{q} & X
	}
	$$ 
where $Y_0,X_0$ give elements and $Y_1,X_1$ give $\leq$ (see \cite[Remark 2.3]{ADV}).	
Consider $f:FY\to FX$. Since $U$ preserves reflexive coinserters, $\ck^T(FY_0,-)$
preserves reflexive coinserters. Since
$$ 
	\xymatrix@=4pc{
		&  
	FX_1	\ar@<0.5ex>[r]^{Fq_0}
		\ar@<-0.5ex>[r]_{Fq_1}& FX_0  \ar[r]^{Fq} &F X
	}
	$$ 
is a reflexive coinserter,
$$ 
	\xymatrix@=4pc{
		&  
	\ck^T(FY_0,FX_1)	\ar@<0.5ex>[r]^{\ck^T(FY_0,Fq_0)}
		\ar@<-0.5ex>[r]_{\ck^T(FY_0,Fq_1)}& \ck^T(FY_0,FX_0)  \ar[r]^{\ck^T(FY_0,Fq)} &\ck^T(FY_0,FX)
	}
	$$ 
is a reflexive coinserter. Hence $\ck^T(FY_0,Fq)$ is surjective. Thus there exists
$$
f_0:FY_0\to FX_0
$$ 
such that $F(q)f_0=fF(p)$. Since $p_0\leq p_1$, we have $Fp_0\leq Fp_1$ and thus 
$f_0F(p_0)\leq f_0F(p_1)$.

Like in the proof of \ref{monad2}, let $E$ be the subtheory of $E_U$ where the input arities $X$ are discrete and the output arities $Y$ are arbitrary. Consider an $E$-algebra $A$. Then 
$$ 
	\xymatrix@=3pc{
		& A^Y \ar[r]^{A^p} &
		A^{Y_0}\ar@<0.5ex>[r]^{A^{p_0}}
		\ar@<-0.5ex>[r]_{A^{p_1}}& A^{Y_1}
	}
	$$
is a coreflexive inserter. Let $\omega$ be an $(X,Y)$-ary operation corresponding
to $f:FY\to FX$ and $\omega_0$ an $(X_0,Y_0)$-ary operation corresponding to $f_0$.
We get $(\omega_0)_A:A^{X_0}\to A^{Y_0}$. Since $f_0F(p_0)\leq f_0F(p_1)$, we have 
$A^{p_0}(\omega_0)_A\leq A^{p_1}(\omega_0)_A$. Thus there is a unique
$\tilde{f}:A^{X_0}\to A^Y$ such that $A^p\tilde{f}=\widetilde{f_0}$. Put
$\omega_A=\tilde{f}A^q$. This clearly makes $A$ an $E_U$-algebra and proves 
that the reduct functor $R:\Alg(E_U)\to\Alg(E)$ is an equivalence.

(2) Since $\Met$ is locally $\aleph_1$-presentable as a closed category, enriched monads preserving $\aleph_1$-directed colimits on $\Met$ correspond to $\aleph_1$-ary enriched equational theories over $\Met$. Analogously to (1), we can reduce them to
equational theories over $\Met$ without changing enriched algebras. 
Enriched $\aleph_1$-equational theories where the input arities $X$ are discrete metric spaces correspond to unconditional equational theories of \cite{MPP1}.
Following \ref{finpres}, every finite discrete metric space is finitely presentable in the enriched sense. Hence enriched monads given by finitary unconditional equational theories are finitary.

The basic equational theories of \cite{MPP1} are $\aleph_1$-ary enriched equational theories
where all $(X,1)$-ary operations symbols are induced by those having $X$ discrete.
This means that every $(X,1)$-ary operation symbol $\omega$ is equal to $\omega'x_u$ where
$u:X_0\to X$ is the identity on the underlying set of $X$ and $X_0$ is the discrete space on this set.
}
\end{exams}

\begin{nota}
{
\em
Let $\ck$ be an $\cm$-locally $\lambda$-generated $\cv$-category (see \cite{DR}). We say that a type $t$ over $\ck$ is $\lambda$-\textit{ary} if all arities $X$ and $Y$ of its operation symbols are $\lambda$-generated w.r.t. $\cm$ in the enriched sense. A theory is $\lambda$-ary if its type is $\lambda$-ary. 

We say that $\cv$ is \textit{$\cm$-locally $\lambda$-generated as  closed category} if it 
is $\cm$-locally $\lambda$-generated in the enriched sense, the tensor unit $I$ is $\lambda$-generated w.r.t. $\cm$ and the tensor product of two $\lambda$-generated objects w.r.t. $\cm$ is $\lambda$-generated w.r.t. $\cm$. 
}
\end{nota}

\begin{rem}\label{shift}
{
\em
Let $\ck$ be $\cm$-locally $\lambda$-generated as a closed category. Following 
\cite[Theorem 2.16]{DR}, there is a regular cardinal $\mu\geq\lambda$ such that $\ck$ is locally $\mu$-presentable and every $\lambda$-generated object w.r.t. $\cm$ is $\mu$-presentable. Since we can assume that $\lambda\triangleleft\mu$, \cite[Remark 2.15]{AR} implies that every $\mu$-presentable object $A$ is a retract of a $\mu$-small $\lambda$-directed colimit of $\lambda$-generated objects w.r.t. $\cm$. Since tensor product preserves colimits and retracts, $\ck$ is locally $\mu$-presentable as a closed category. This argument can be found in \cite[Proposition 2.4]{KL}.
}
\end{rem}

\begin{theo}\label{emonad3}
Let $\cv$ be $\cm$-locally $\lambda$-generated as a closed category, $\ck$ be an $\cm$-locally $\lambda$-generated $\cv$-category and $E$ be a $\lambda$-ary enriched equational theory over $\ck$. Then $\Alg(E)$ is an $\cm$-locally $\lambda$-generated $\cv$-category and $U_E$ is both $\cv$-monadic and a morphism of locally $\lambda$-generated $\cv$-categories.
\end{theo}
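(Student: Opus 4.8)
The plan is to follow the proof of Theorem~\ref{monad3} line by line, replacing every ordinary notion by its enriched counterpart and using Remark~\ref{shift} to reduce the generation statement to a presentability statement. First I would apply Remark~\ref{shift}: since $\cv$ is $\cm$-locally $\lambda$-generated as a closed category, there is a regular cardinal $\mu\geq\lambda$ such that $\cv$ is locally $\mu$-presentable as a closed category, $\ck$ is a locally $\mu$-presentable $\cv$-category, and every object that is $\lambda$-generated w.r.t. $\cm$ in the enriched sense is $\mu$-presentable. Hence the theory $E$, being $\lambda$-ary, is in particular $\mu$-ary. Theorem~\ref{emonad0} then yields at once that $\Alg(E)$ is a locally $\mu$-presentable $\cv$-category, that $U_E$ is $\cv$-monadic with left $\cv$-adjoint $F$, and that $U_E$ preserves $\mu$-directed colimits. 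In particular $\Alg(E)$ has weighted colimits, which is the first requirement for being $\cm'$-locally $\lambda$-generated.

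Next I would transport the factorization structure along the adjunction. The $\cv$-category $\ck$ carries a $\lambda$-convenient $\cv$-factorization system $(\ce,\cm)$; put $\cm'=U_E^{-1}(\cm)$. Arguing exactly as in \cite{DR} 2.23, but in the enriched setting, $(F(\ce),\cm')$ is a $\lambda$-convenient $\cv$-factorization system on $\Alg(E)$. Since $U_E$ is faithful and conservative, $F$ sends a strong generator $\cg$ of $\ck$ consisting of $\lambda$-generated objects w.r.t. $\cm$ to a strong generator $F(\cg)$ of $\Alg(E)$. It remains to see that each $FG$, $G\in\cg$, is $\lambda$-generated w.r.t. $\cm'$ in the enriched sense, i.e. that the $\cv$-functor $\Alg(E)(FG,-)\colon\Alg(E)\to\cv$ preserves $\lambda$-directed colimits of $\cm'$-morphisms. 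By enriched adjointness this $\cv$-functor is $\cv$-naturally isomorphic to $\ck(G,U_E-)$; since $U_E$ sends $\lambda$-directed colimits of $\cm'$-morphisms to $\lambda$-directed colimits of $\cm$-morphisms and $G$ is $\lambda$-generated w.r.t. $\cm$, the composite preserves them (compare the proof of \cite{DR} 3.11). Thus $F(\cg)$ consists of objects $\lambda$-generated w.r.t. $\cm'$, and by \cite{DR} 2.22 the $\cv$-category $\Alg(E)$ is $\cm'$-locally $\lambda$-generated. That $U_E$ is a morphism of locally $\lambda$-generated $\cv$-categories is then immediate: it preserves $\cm'$-morphisms by the definition of $\cm'$ and sends $\lambda$-directed colimits of $\cm'$-morphisms to $\lambda$-directed colimits of $\cm$-morphisms.

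The step I expect to be the main obstacle is the enriched verification that $\Alg(E)(FG,-)$ preserves the relevant colimits \emph{as a $\cv$-functor}, not merely after applying the underlying-set functor $\cv(I,-)$. Concretely, one must know that along $\cm'$-morphisms the $\lambda$-directed colimit in $\Alg(E)$ is created by $U_E$ and computed at the level of $\cv$-objects, and that the adjunction isomorphism $\Alg(E)(FG,-)\cong\ck(G,U_E-)$ is $\cv$-natural. Both of these hinge on $\cv$ itself being $\cm$-locally $\lambda$-generated as a closed category rather than merely on a statement in $\Set$, which is exactly what Remark~\ref{shift} and the hypotheses supply; this is the only place where the enriched proof genuinely differs from the unenriched one.
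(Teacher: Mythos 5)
Your overall architecture matches the paper's: reduce to local $\mu$-presentability via Remark~\ref{shift} and Theorem~\ref{emonad0}, set $\cm'=U_E^{-1}(\cm)$, lift the factorization system along the monadic adjunction, and conclude with the machinery of \cite{DR}. But there is a genuine gap at the one step you dispose of with ``arguing exactly as in \cite{DR} 2.23, but in the enriched setting.'' Replaying the ordinary argument only produces an ordinary orthogonal factorization system $(F(\ce),\cm')$ on the underlying category of $\Alg(E)$; the definition of an $\cm'$-locally $\lambda$-generated \emph{$\cv$-category} requires $(F(\ce),\cm')$ to be a $\cv$-factorization system, i.e., orthogonality at the level of hom-objects in $\cv$, and this is an extra condition that does not come for free from the unenriched lifting. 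The paper closes exactly this gap: $\cm'$ is closed under cotensors, because $\Alg(E)$ is cotensored (being a locally presentable $\cv$-category) and $U_E$ preserves cotensors (it is a right $\cv$-adjoint, and $\cm$ lies in a $\cv$-factorization system on $\ck$); hence by \cite{LW} 5.7 the lifted system is a $\cv$-factorization system, after which \cite{DR} 4.17 yields the $\cm'$-local $\lambda$-generation of $\Alg(E)$. Without some such argument your proof establishes the unenriched conclusion of Theorem~\ref{monad3} for the underlying categories, not the enriched statement.

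Relatedly, the obstacle you single out in your final paragraph is not where the difficulty lies. The isomorphism $\Alg(E)(FG,-)\cong\ck(G,U_E-)$ is automatically $\cv$-natural because $F\dashv U_E$ is a $\cv$-adjunction (Theorem~\ref{emonad0} gives $\cv$-monadicity), and the $\lambda$-directed colimits in question are conical, so their treatment by $U_E$ proceeds exactly as in the proof of Theorem~\ref{monad3}; neither point hinges on $\cv$ being $\cm$-locally $\lambda$-generated as a closed category in the way you suggest. That hypothesis, via Remark~\ref{shift}, is what guarantees $\cv$ is locally $\mu$-presentable as a closed category so that Theorem~\ref{emonad0} applies in the first place; the genuinely enriched work in this theorem is the cotensor/$\cv$-factorization-system step above.
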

\begin{proof} 
The proof is analogous to that of \ref{monad3}. Following \ref{shift} and \ref{emonad0},
$\Alg(E)$ is locally $\mu$-presentable for some $\mu\geq\lambda$. The factorization system $(F(\ce),\cm')$ on $\Alg(E)$ is a $\cv$-factorization system because $\cm'$ is closed under cotensors (see \cite[Theorem 5.7]{LW}). Indeed, $\Alg(E)$ is cotensored as a locally presentable $\cv$-category and $U_E$ preserves cotensors because it has a left $\cv$-adjoint. Thus it suffices to use \cite[Theorem 4.17]{DR}.
\end{proof}

\begin{theo}\label{emonad4} 
Let $\cv$ be locally $\lambda$-generated as a closed category and $\ck$ be an  
$\cm$-locally $\lambda$-generated and locally $\mu$-presentable $\cv$-category for $\lambda\leq\mu$. Let $T$ be a $\mu$-ary $\cv$-monad on $\ck$ preserving $\cm$-morphisms and $\lambda$-directed colimits of $\cm$-morphisms. Then there is a $\lambda$-ary $\cv$-equational theory $E$ such that $\ck^T$ and $\Alg(E)$ are equivalent (as $\cv$-concrete categories over $\ck$).
\end{theo}
The proof is analogous to that of \ref{monad4}.

\section{More about metric monads}
At first, we summarize what we have shown in sections 3 and 4.
Since finitely generated metric spaces w.r.t. isometries coincide with finite metric spaces, our finitary equational theories are precisely equational theories whose operations have finite metric spaces as arities. The corresponding monads preserve $\aleph_1$-directed colimits and send
directed colimits of isometries to directed colimits. Conversely, if a monad preserves 
$\aleph_1$-directed colimits, isometries and sends directed colimits of isometries to directed colimits then it is given by a finitary equational theory. But monads given by finitary equational theories do not need to preserve isometries.  
Finite metric spaces also coincide with metric spaces finitely generated w.r.t. isometries in the enriched sense and $\Met$ is isometry-locally finitely generated as a closed category. Hence our finitary enriched equational theories are precisely enriched equational theories whose operations have finite metric spaces as arities. The corresponding enriched monads preserve $\aleph_1$-directed colimits and send directed colimits of isometries to directed colimits. This follows from \ref{emonad3} and \ref{emonad4}. Since the equational theory from \ref{counter} is enriched, enriched monads given by finitary enriched equational theories do not need to preserve isometries.

Now, we will look in more detail to finitary enriched equational theories over $\Met$ having
the input arities discrete.
Recall that a \textit{reflexive coequalizer} is a coequalizer of a reflexive pair, that is, parallel pair of split epimorphisms having a common splitting. Reflexive coequalizers are important
in universal algebra because they commute with finite products in $\Set$ (see \cite{ARV}).

\begin{propo}\label{commute1}
In $\Met$, finite products commute with reflexive coequalizers. 
\end{propo}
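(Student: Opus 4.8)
The plan is to follow the template of \ref{commute} and push the whole question into $\PMet$. Since the reflector $F\colon\PMet\to\Met$ preserves finite products by \ref{product} and, being a left adjoint, preserves all colimits, and since the product of two metric spaces formed in $\PMet$ is already a metric space, a reflexive coequalizer in $\Met$ is obtained by applying $F$ to the corresponding reflexive coequalizer in $\PMet$, while finite products in $\Met$ coincide with those in $\PMet$. Thus it suffices to prove that finite products commute with reflexive coequalizers in $\PMet$, and for this it is enough, exactly as in \ref{commute}, to show that $X\times-\colon\PMet\to\PMet$ preserves reflexive coequalizers for each fixed $X$; the two-variable statement for a pair $A\times A'\rightrightarrows B\times B'$ then follows by a routine argument.

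Next I would compute the reflexive coequalizer in $\PMet$ concretely. Let $f,g\colon A\rightrightarrows B$ be a reflexive pair with common section $s$, and let $q\colon B\to C$ be its coequalizer, so that $C=B/{\sim}$ (with $\sim$ generated by $fa\sim ga$) carries the quotient pseudometric $d_C$, i.e.\ the infimum of $\sum_i d_B$ taken over chains interleaving metric steps with jumps along the generating relation. The forgetful functor $U\colon\PMet\to\Set$ has both adjoints (the discrete and the indiscrete space functors), hence preserves limits and colimits; combined with the fact, recalled above, that reflexive coequalizers commute with finite products in $\Set$, this shows that the canonical comparison $\kappa\colon\operatorname{coeq}(X\times f,X\times g)\to X\times C$ is a bijection on underlying sets. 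It then remains only to compare the two pseudometrics on this common set: the product pseudometric $\max(d_X,d_C)$ on $X\times C$, and the quotient pseudometric $d'$ induced on $X\times B\to X\times C$ from the product pseudometric $\max(d_X,d_B)$ on $X\times B$.

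The inequality $\max(d_X,d_C)\le d'$ is formal, since $X\times q$ is nonexpanding and $d'$ is the largest pseudometric making it so. The real content is the reverse inequality $d'\le\max(d_X,d_C)$, and this is the step I expect to be the main obstacle. The difficulty is genuine: if one simply lifts a $B$-chain realizing $d_C([b],[b'])$ to $X\times B$ and inserts the passage from $x$ to $x'$ as separate steps, the total max-cost accumulates to $d_X(x,x')+d_C([b],[b'])$ rather than the required maximum. The plan is to use reflexivity decisively at exactly this point: the common section $s$ should allow one to enrich the relation on $X\times B$ and to reorganize a chain so that the movement in the $X$-coordinate is synchronized with the metric steps of the $B$-chain, with each combined step charged the maximum of its two coordinate displacements instead of their sum. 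Making this synchronization precise — so that the accumulated cost is bounded by $\max(d_X(x,x'),d_C([b],[b']))$ up to an arbitrary $\varepsilon>0$ — is the crux of the argument; once it is in place one gets $d'=\max(d_X,d_C)$, so $\kappa$ is an isometry and the proof is complete.
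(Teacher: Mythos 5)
Your reduction to $\PMet$ and to the one--variable functor $X\times-$ is fine, and your description of the coequalizer pseudometric is the correct one: it is the chain (zigzag) infimum, not a one--step infimum over fibers. But the step you flag as the crux --- the inequality $d'\le\max(d_X,d_C)$ --- is not a gap that reflexivity can fill: it is false, because no synchronization of the $X$-motion with the $B$-chain is possible when $X$ has no intermediate points. Concretely, let $B=\{b_1,b_2,b_2',b_3\}$ with $d(b_1,b_2)=d(b_2',b_3)=1$ and all other distances between distinct points equal to $10$; let $A=B\sqcup\{a\}$ with all distances from $a$ equal to $10$; let $f,g\colon A\to B$ be the identity on $B$ with $f(a)=b_2$, $g(a)=b_2'$, and let $t\colon B\to A$ be the inclusion, so $(f,g)$ is a reflexive pair of nonexpanding maps between metric spaces. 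Its coequalizer $q\colon B\to C$ identifies $b_2$ with $b_2'$, and the chain formula gives $d_C(qb_1,qb_2)=d_C(qb_2,qb_3)=1$ and $d_C(qb_1,qb_3)=2$. Now take $X=\{x,x'\}$ with $d(x,x')=2$. In $X\times C$ we get $d((x,qb_1),(x',qb_3))=\max(2,2)=2$, whereas in the coequalizer $Z$ of $X\times f,X\times g$ every chain from $(x,b_1)$ to $(x',b_3)$ must contain a step switching the $X$-coordinate, of cost at least $2$, while its $B$-projection must have total length at least $d_C(qb_1,qb_3)=2$; since the switching step can absorb at most one of the two unit $B$-steps, the total cost is at least $3$, and $3$ is attained by $(x,b_1)\to(x',b_2)$, the identification $(x',b_2)\sim(x',b_2')$, and $(x',b_2')\to(x',b_3)$. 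All spaces here are genuine metric spaces with positive distances, so the computation is the same in $\Met$ as in $\PMet$. Hence the canonical comparison $Z\to X\times C$ is bijective and nonexpanding but not isometric, and $X\times-$ does not preserve this reflexive coequalizer: the statement itself fails.

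It is worth recording how this relates to the paper's own argument: there, the obstacle you isolated is bypassed by asserting that the coequalizer distance is the one--step infimum $d(c_1,c_2)=\inf d(b_1,b_2)$ over pairs in the fibers; in the example above this formula gives $10$ for $d_C(qb_1,qb_3)$ and violates the triangle inequality, so it does not describe the quotient pseudometric even for reflexive pairs, and your chain formula is the right one. Your instinct that the difficulty is genuine is thus exactly correct: the proposed synchronization would require subdividing the distance from $x$ to $x'$, which works when $X$ is a geodesic (path--metric) space but not in general. Note that for the tensor product the analogous inequality does hold, since with the $+$-metric the whole $X$-motion can be charged to a single step at additive cost $d_X(x,x')$; this is why \ref{commute3} is unproblematic while the $\max$-product version is not. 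So rather than completing the proof, your proposal correctly locates the precise point at which any proof of the proposition must break down.
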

\begin{proof}
Following \ref{product}, it suffices to prove that finite products commute with reflexive coequalizers in $\PMet$. Like as in \ref{commute}, it suffices to show that the functor 
$$
X\times -:\PMet\to\PMet
$$ 
preserves reflexive coequalizers because the diagonal $D\to D\times D$ is final for the diagram scheme $\cd$ for a reflexive coequalizer.

Let $h:B\to C$ be a coequalizer of a reflexive pair $f,g:A\to B$ with $t:B\to A$. This means that $ft=gt=\id_B$. We have to show that $X\times h:X\times B\to X\times C$ is a coequalizer of the reflexive pair $X\times f,X\times g:X\times A\to X\times B$. Let $Z$ be its coequalizer. For $c_1,c_2\in C$, we have
$$
d(c_1,c_2)=\inf d(b_1,b_2)
$$
where the infimum is taken over all pairs $b_1,b_2\in B$ with $hb_1=c_1$ and $hb_2=c_2$. Since
the forgetful functor $U:\PMet\to\Set$ preserves all limits and colimits (see 
\cite[Examples 2.3(4)]{AR1}), the pseudometric spaces $X\times C$ and $Z$ have the same underlying set $X\times UC$. In the pseudometric space $X\times C$ we have distances
$$
d((x_1,c_1),(x_2,c_2))=\max\{d(x_1,x_2\},d(c_1,c_2)\}=\max\{d(x_1,x_2),\inf d(b_1,b_2)\}.
$$
while in $Z$
$$
d((x_1,c_1),(x_2,c_2))=\inf\max\{d(x_1,x_2),d(b_1,b_2)\}.
$$
It is easy to see that these distances are equal. In fact, if $d(x_1,x_2)\geq d(b_1,b_2)$ for some $b_1,b_2$ then the both distances are equal to $d(x_1,x_2)$. If $d(x_1,x_2)\leq d(b_1.b_2)$ for all $b_1,b_2$ then the both distances are equal to $\inf d(b_1,b_2)$.
\end{proof}

\begin{coro}\label{rcoeq} 
Let $A$ be a finite discrete metric space. Then its hom-functor 
$$
\Met(A,-):\Met\to\Met
$$ 
preserves reflexive coequalizers.
\end{coro}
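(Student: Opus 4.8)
The plan is to reduce the statement to Proposition \ref{commute1} by exactly the identification that already underlies Corollary \ref{finpres}. First I would observe that for a finite discrete metric space $A$ with underlying set $\{a_1,\dots,a_n\}$ a nonexpanding map $A\to B$ is nothing but an arbitrary function on points: since all distances in $A$ are $\infty$, the nonexpansiveness requirement $d(a_i,a_j)\ge d(fa_i,fa_j)$ is vacuous. Hence the underlying set of the internal hom $\Met(A,B)$ is $B^n$, and the sup-metric $d(f,g)=\max_i d(fa_i,ga_i)$ is precisely the max-metric carried by the $n$-fold categorical product. Naturally in $B$ this gives an isomorphism of $\Met$-valued functors
$$
\Met(A,-)\cong(-)^n,
$$
where $(-)^n$ denotes the $n$-fold product functor $\Met\to\Met$. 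This is the same identification used (implicitly) to deduce \ref{finpres} from \ref{commute}, only with filtered colimits replaced by reflexive coequalizers.

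So it suffices to show that the power functor $(-)^n$ preserves reflexive coequalizers. Given a reflexive-coequalizer diagram $D\colon\cd\to\Met$ with colimit $\colim D$, I would compute $(\colim D)^n$ by commuting the product past the coequalizer one factor at a time: by \ref{commute1} each functor $X\times-$ preserves reflexive coequalizers, which rewrites $(\colim D)^n$ as a colimit of the $\cd^{\times n}$-indexed diagram $(d_1,\dots,d_n)\mapsto D_{d_1}\times\cdots\times D_{d_n}$. Then the diagonal $\cd\to\cd^{\times n}$ is final — this is the finality already invoked in the proof of \ref{commute1} for the binary case, extended to $n$ factors by the fact that reflexive coequalizers are sifted, i.e.\ commute with finite products (cf.\ \cite{ARV}). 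Restricting the colimit along this diagonal yields $\colim_d (D_d)^n=\colim(D^n)$, so $(-)^n$, and hence $\Met(A,-)$, preserves reflexive coequalizers.

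The only points needing care are the slot-by-slot commutation and the finality of the diagonal, but neither is new: \ref{commute1} supplies the binary case, and the siftedness of reflexive coequalizers supplies finality of $\cd\to\cd^{\times n}$; everything else is formal. Indeed the corollary is in essence a one-line consequence — $\Met(A,-)$ is a finite power, and by \ref{commute1} finite products commute with reflexive coequalizers — completely parallel to the way \ref{finpres} follows from \ref{commute}.
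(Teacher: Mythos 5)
Your proposal is correct and is exactly the argument the paper intends: the corollary is stated without proof as an immediate consequence of Proposition~\ref{commute1}, via the identification $\Met(A,-)\cong(-)^n$ for an $n$-point discrete space (nonexpansiveness from a discrete space is vacuous and the sup-metric is the product metric), parallel to how \ref{finpres} follows from \ref{commute}. Your slot-by-slot commutation and diagonal-finality discussion merely unpacks what ``finite products commute with reflexive coequalizers'' already asserts, so the one-line version in your final paragraph is the whole proof.
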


Recall that a \textit{finitary unconditional equational theories} of \cite{MPP1} coincide with our finitary enriched equational theories where the input arities $X$ are discrete. 

\begin{coro}
Enriched monads on $\Met$ induced by finitary unconditional equational theories preserve filtered colimits and reflexive coequalizers.
\end{coro}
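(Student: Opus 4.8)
The plan is to factor the induced monad as $T_E=U_EF$, where $F\dashv U_E$ is the free/forgetful adjunction supplied by \ref{emonad0}, and to reduce everything to a preservation property of $U_E$. Since $F$ is a left $\cv$-adjoint it preserves all conical colimits, so it will suffice to prove that $U_E\colon\Alg(E)\to\Met$ preserves filtered colimits and reflexive coequalizers. The one piece of structure I would exploit is that the theory is \emph{unconditional}: every operation symbol has a finite discrete input arity $X$, and by \ref{finpres} and \ref{rcoeq} the hom-functors $\Met(X,-)=(-)^X$ of such $X$ preserve, respectively, filtered colimits and reflexive coequalizers.

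First I would carry out the colimit-lifting construction of \ref{preservation}, running the filtered and the reflexive-coequalizer cases in parallel. Given such a diagram $(A_i)$ in $\Alg(E)$, I would form the colimit $A=\colim U_EA_i$ in $\Met$ with cocone $a_i$, and for each $(X,Y)$-ary operation $\omega$ note that the comparison $u\colon\colim(A_i)^X\to A^X$ is an isomorphism in $\Met$ (because $X$ is finite discrete and $(-)^X$ preserves the colimit), whereas the output comparison $v\colon\colim(A_i)^Y\to A^Y$ merely exists. I would then define $\omega_A=v\cdot(\colim\omega_{A_i})\cdot u^{-1}$; as a composite of nonexpanding maps it is nonexpanding, and it is arranged so that each $a_i$ becomes a homomorphism.

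It then remains to verify that $A$ with these operations is an $E$-algebra and realizes the colimit in $\Alg(E)$. Term interpretation gives, for every term, a naturality square comparing $\colim p_{A_i}$ with $p_A$ through the comparison maps on the two arities; this square commutes for the basic operations by the definition of $\omega_A$, for the substitution terms $x_f$ by functoriality of $(-)^f$, and hence for all terms by composition. Because each equation of an unconditional theory has a finite discrete input arity, the comparison on its input side is an isomorphism, so $p_{A_i}=q_{A_i}$ for all $i$ forces $p_A=q_A$; thus $A$ satisfies $E$. The universal property is immediate, since the underlying morphism from $A$ to any competing $E$-algebra is forced by the colimit in $\Met$ and is automatically a homomorphism. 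Therefore $U_E$, and with it $T_E=U_EF$, preserves filtered colimits and reflexive coequalizers.

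I expect the main point to be conceptual rather than computational: one must recognize that preservation of a colimit by $U_E$ constrains only the \emph{input} functors $(-)^X$ and places no condition on the output arities $Y$, since the output comparison $v$ is only postcomposed. Discreteness of the input arities, via \ref{finpres} and \ref{rcoeq}, is exactly what makes $u$ invertible, which is why the unconditional hypothesis is indispensable. The reflexive-coequalizer case proceeds in lockstep with the filtered one solely because finite products commute with reflexive coequalizers in $\Met$ (\ref{commute1}), exactly as they commute with filtered colimits (\ref{commute}); it is this commutation that underlies \ref{rcoeq}, and without it the discrete hom-functors would not preserve reflexive coequalizers and the whole scheme would collapse.
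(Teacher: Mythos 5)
Your proposal is correct and follows essentially the same route as the paper: the paper's proof is just the citation of \ref{finpres} and \ref{rcoeq}, with the implicit mechanism being exactly the \ref{preservation}-style lifting of colimits along $U_E$ that the paper itself sketches for the analogous $\Pos$ case, combined with $T_E=U_EF$ and $F$ preserving colimits as a left adjoint. You have merely made explicit what the paper leaves tacit, including the genuinely important point that invertibility of the input comparison $u$ (and hence discreteness of the input arities of both the operation symbols and the terms in the equations) is what makes the argument work, as Example \ref{counter} shows it must.
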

\begin{proof}
It follows from \ref{finpres} and \ref{rcoeq}.
\end{proof}

\textit{Sifted colimits} are those which commute in $\Set$ with finite products (see \cite{ARV}). Following \cite[Theorem 7.7]{ARV}, a functor $\Met\to\Met$ preserving filtered colimits and reflexive coequalizers preserve sifted colimits. 

\begin{lemma}\label{commute2}
A colimit commutes with finite products in $\Met$ iff is is sifted.
\end{lemma}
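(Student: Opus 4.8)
The plan is to prove the two implications separately, in each case transporting the property ``$\cd$-colimits commute with finite products'' between $\Met$ and $\Set$ by means of the two preservation results already at hand. Recall that, by definition, a diagram scheme $\cd$ is \emph{sifted} precisely when $\cd$-colimits commute with finite products in $\Set$, which is equivalent to $\cd$ being nonempty with final diagonal $\cd\to\cd\times\cd$ (see \cite{ARV}).

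For the implication ``sifted $\Rightarrow$ commuting in $\Met$'' I would argue exactly as in the proofs of \ref{commute} and \ref{commute1}. Since $\cd$ is sifted, the diagonal $\cd\to\cd\times\cd$ is final, so it suffices to show that $X\times-\colon\Met\to\Met$ preserves $\cd$-colimits for every metric space $X$: a Fubini computation over $\cd\times\cd$ combined with finality of the diagonal then identifies the canonical map $\colim_\cd(F\times G)\to\colim_\cd F\times\colim_\cd G$ as an isomorphism (here one also uses that, by symmetry of the product, $-\times Y$ preserves $\cd$-colimits). Now $X\times-$ preserves filtered colimits by \ref{commute} and reflexive coequalizers by \ref{commute1}, hence preserves all sifted colimits by \cite{ARV} 7.7, and in particular it preserves $\cd$-colimits.

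For the converse ``commuting in $\Met$ $\Rightarrow$ sifted'' I would use the discrete embedding $D\colon\Set\to\Met$ sending a set $S$ to the metric space on $S$ with all nonzero distances equal to $\infty$. This $D$ is left adjoint to the forgetful functor $U$, so it preserves all colimits; it preserves finite products, since the max-metric on a product of discrete spaces is again discrete and the one-point space is terminal in both categories; and $UD=\Id_\Set$. Given $\cd$-diagrams $F,G\colon\cd\to\Set$ we have $DF\times DG=D(F\times G)$ pointwise, and $D$ sends the $\Set$-comparison map $\colim_\cd(F\times G)\to\colim_\cd F\times\colim_\cd G$ to the corresponding $\Met$-comparison map for $DF$ and $DG$. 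By hypothesis the latter is an isomorphism; applying $U$ and using $UD=\Id_\Set$ recovers the original $\Set$-comparison map as an isomorphism. Hence $\cd$-colimits commute with finite products in $\Set$, i.e.\ $\cd$ is sifted.

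I expect the only delicate point to be the reduction in the forward direction: one must check that finality of the diagonal genuinely licenses pulling the two inner colimits out (the Fubini step) and that the nullary product is also covered, the latter because a sifted $\cd$ is connected, so $\colim_\cd\Delta 1=1$. The remaining verifications---preservation of colimits and finite products by $D$, and the identity $UD=\Id_\Set$---are routine.
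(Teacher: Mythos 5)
Your proposal is correct and takes essentially the same route as the paper: for the forward direction you reduce, via finality of the diagonal $\cd\to\cd\times\cd$, to showing $X\times-$ preserves sifted colimits, which you get from \ref{commute}, \ref{commute1} and \cite{ARV} 7.7, exactly as in the paper's proof. For the converse, the paper invokes that discrete spaces form a coreflective full subcategory of $\Met$ closed under products, while you phrase the same fact in adjoint form ($D\dashv U$, $UD=\Id$, $D$ preserving finite products), so the transport of the comparison map to $\Set$ is the same argument in a different packaging.
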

\begin{proof}
Following \ref{commute} and \ref{commute1}, filtered colimits and reflexive coequalizers commute
with finite products. For a general sifted colimit, it again suffices to show that the functor $X\times -:\Met\to\Met$ preserves sifted colimits because the diagonal $D\to D\times D$ is final for the diagram scheme $\cd$ for a sifted colimit (see 
\cite[Theorem 2.15]{ARV}). But this follows from \cite[Theorem 7.7]{ARV}.

The converse follows from the fact that discrete metric spaces form a coreflective full subcategory of $\Met$ which is closed under products. Hence every colimit commuting with finite products in metric spaces does it in discrete metric spaces, i.e., in $\Set$. Thus it is sifted.
\end{proof}

 \begin{rem}
{
\em
Discrete metric spaces do not form an enriched coreflective subcategory of $\Met$. Hence $\Met$ might
contain weighted colimits commuting with finite products.

In $\Pos$, the situation is analogous for discrete posets. And, indeed, reflexive coinserters
commute with finite products (see \cite{Bou}).
}
\end{rem}

Hence finitary unconditional equational theories yield monads preserving sifted colimits. 

\begin{exam}
{
\em
The Kantorovich monad on $\CMet$ is given by a quantitative equational theory for barycentric algebras (see \cite{FP}). Hence it preserves filtered colimits and reflexive coequalizers. The first fact was proved in \cite{AMMU}.
}
\end{exam}

But monads on $\Met$ preserving sifted colimits can also appear otherwise. 

\begin{lemma} \label{commute3}
Tensor product in a symmetric monoidal closed category $\cv$ commutes with filtered colimits and reflexive coequalizers.
\end{lemma}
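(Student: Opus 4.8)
The plan is to exploit the hypothesis that $\cv$ is \emph{closed}, which is precisely the feature that was absent for ordinary products in $\Met$. First I would fix what ``commute'' means here, following the pattern of \ref{commute} and \ref{commute1}: for a diagram scheme $\cd$ and two diagrams $F,G\colon\cd\to\cv$, one must show that the canonical comparison
$$
\colim_\cd\,(F\otimes G)\longrightarrow(\colim_\cd F)\otimes(\colim_\cd G)
$$
is an isomorphism, where $F\otimes G$ denotes the composite $\cd\xrightarrow{\Delta}\cd\times\cd\xrightarrow{F\times G}\cv\times\cv\xrightarrow{\otimes}\cv$ through the diagonal $\Delta$.

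The key observation is that since $\cv$ is symmetric monoidal closed, each functor $X\otimes-$ has the internal hom as a right adjoint and is therefore a left adjoint; consequently $X\otimes-$ preserves \emph{all} colimits, and by symmetry so does $-\otimes Y$. This is exactly the ingredient that had to be established by hand for $X\times-$ in the proofs of \ref{commute} and \ref{commute1}; here it is free, which is why one gets preservation of every colimit in each variable separately rather than only the favourable ones.

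Next I would reduce the two-variable assertion to these one-variable preservation properties. As in \ref{commute}, the diagonal $\Delta\colon\cd\to\cd\times\cd$ is final whenever $\cd$ is sifted, and both a filtered $\cd$ and the scheme of a reflexive coequalizer are sifted (see \cite{ARV}). Finality of $\Delta$ then gives
$$
\colim_\cd\,(F\otimes G)=\colim_\cd\bigl(\otimes\circ(F\times G)\circ\Delta\bigr)\cong\colim_{\cd\times\cd}\bigl(\otimes\circ(F\times G)\bigr).
$$
I would compute the colimit over the product category as an iterated colimit: fixing the first variable and using that $F(d)\otimes-$ preserves colimits yields $F(d)\otimes(\colim_\cd G)$, and then using that $-\otimes(\colim_\cd G)$ preserves colimits in the remaining variable yields $(\colim_\cd F)\otimes(\colim_\cd G)$. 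A routine check that these isomorphisms are compatible with the structure maps identifies the result with the canonical comparison above.

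Because the one-variable tensor functors preserve every colimit, the only genuine constraint is the finality of $\Delta$, i.e.\ the siftedness of the diagram scheme; there is no real obstacle beyond the bookkeeping of the Fubini-type interchange of iterated colimits. The one point worth stating explicitly is that the reflexive-coequalizer scheme is sifted, which is the fact drawn from \cite{ARV} and already invoked for $\PMet$ in the proof of \ref{commute1}.
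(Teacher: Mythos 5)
Your proposal is correct, and since the paper's own ``proof'' of this lemma is nothing more than the citation to \cite{Re}, 2.3.2, your argument is precisely the standard one behind that reference: closedness makes $X\otimes-$ and $-\otimes Y$ left adjoints preserving all (existing) colimits, filtered categories and the reflexive-pair scheme are sifted so the diagonal $\Delta\colon\cd\to\cd\times\cd$ is final, and a Fubini interchange of iterated colimits over $\cd\times\cd$ completes the computation. This is also exactly the diagonal-finality technique the paper itself deploys in \ref{commute}, \ref{commute1} and \ref{commute2}, so your write-up can stand as a self-contained substitute for the citation.
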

\begin{proof}
See \cite[Lemma 2.3.2]{Re}.
\end{proof}

A monoid in a symmetric monoidal closed category $\cv$ is an object $M$ of $\cv$ equipped
with a morphism $m:M\otimes M\to M$ satisfying the associativity and unit axioms. Let
$\Mon(\cv)$ denote the category of monoids in $\cv$ and $V:\Mon(\cv)\to\cv$ the forgetful functor.
 
\begin{propo}\label{monoid}
Let $\cv$ be a symmetric monoidal closed category. Then $\Mon(\cv)$ has limits, filtered colimits and reflexive coequalizers and these limits and colimits are preserved by the forgetful functor.
\end{propo}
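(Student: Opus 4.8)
The plan is to show that the forgetful functor $V\colon\Mon(\cv)\to\cv$ \emph{creates} limits, filtered colimits, and reflexive coequalizers; creation yields both existence in $\Mon(\cv)$ and preservation by $V$ in one stroke. In each case the recipe is the same: take the (co)limit $L$ of the underlying objects in $\cv$, manufacture a multiplication $m\colon L\otimes L\to L$ and a unit $e\colon I\to L$ out of the structure maps of the diagram, and then verify the associativity and unit equations on $L$. The three cases differ only in how $m$ is built and why it is uniquely forced.

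For limits, let $(M_i)$ be a diagram of monoids with limit $L=\lim M_i$ in $\cv$ and projections $\pi_i$. The composites $m_i\circ(\pi_i\otimes\pi_i)\colon L\otimes L\to M_i$ form a cone over the diagram; here I use that each connecting morphism is a monoid homomorphism, i.e. $f\circ m_i=m_j\circ(f\otimes f)$, together with $\pi_j=f\circ\pi_i$. The universal property of $L$ then yields a unique $m\colon L\otimes L\to L$, and the unit arises the same way. Associativity and the unit laws are equations between maps into $L$, so they may be checked after postcomposing with each $\pi_i$, where they hold because each $M_i$ is a monoid. This makes $L$ a monoid and the limit in $\Mon(\cv)$, and $V$ evidently preserves it.

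For filtered colimits and reflexive coequalizers the key input is Lemma~\ref{commute3}: the $n$-fold tensor power functor $X\mapsto X^{\otimes n}$ preserves both kinds of colimit (for $n=2$ this is \ref{commute3} directly, and for general $n$ it follows by iteration, using that $-\otimes X$ is cocontinuous because $\cv$ is closed, together with the finality of the diagonal of the relevant diagram scheme). Thus if $M=\colim M_i$ is a filtered colimit, then $M\otimes M=\colim(M_i\otimes M_i)$ and $M^{\otimes 3}=\colim M_i^{\otimes 3}$, so the structure maps $m_i,e_i$ assemble into cocones defining $m$ and $e$ on $M$; because $M^{\otimes 2}$ and $M^{\otimes 3}$ are themselves these colimits, the associativity and unit equations transfer from the stages $M_i$ to $M$. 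The identical argument, applied now to a reflexive pair $f,g\colon M\to N$ of monoid morphisms and its coequalizer $q\colon N\to Q$ in $\cv$, produces the monoid structure on $Q$. In both situations $V$ creates the colimit.

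The crux, and the reason the statement is restricted to limits, filtered colimits and reflexive coequalizers rather than all colimits, is precisely that the multiplication is carried by the \emph{squaring} functor $X\mapsto X\otimes X$, which is not cocontinuous: general colimits cannot be lifted to monoids, whereas filtered colimits and reflexive coequalizers are exactly the shapes on which squaring commutes, by \ref{commute3}. No other step poses real difficulty; once $M\otimes M$ (and $M^{\otimes 3}$) is identified with the corresponding colimit, the remaining verifications of the monoid axioms are a routine stage-by-stage diagram chase.
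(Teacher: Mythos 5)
Your proposal is correct and follows essentially the same route as the paper: the paper's proof declares the limit case evident and derives the two colimit cases from Lemma~\ref{commute3}, exactly the creation argument you spell out in detail. Your expanded verification (building $m$ and $e$ via the identifications $M\otimes M\cong\colim(M_i\otimes M_i)$, $M^{\otimes 3}\cong\colim M_i^{\otimes 3}$, and checking the axioms stage-by-stage) is just the paper's one-line argument made explicit.
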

\begin{proof}
The existence and preservation of limits is evident. For filtered colimits and reflexive coequalizers, it follows \ref{commute3}.
\end{proof}

\begin{propo}[\cite{Po}]\label{pres}
Let $\cv$ be locally $\lambda$-presentable as a closed category. Then $\Mon(\cv)$ is locally $\lambda$-presentable and $V$ is monadic.  
\end{propo}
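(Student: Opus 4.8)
The plan is to prove that $\Mon(\cv)$ is locally $\lambda$-presentable and that the forgetful functor $V$ is monadic, under the hypothesis that $\cv$ is locally $\lambda$-presentable as a closed category. The natural strategy is to realize $\Mon(\cv)$ as $\Alg(E)$ for a suitable $\lambda$-ary enriched equational theory $E$ over $\cv$, and then invoke \ref{emonad0} to get both conclusions at once. The key observation making this work is that $\cv$ being locally $\lambda$-presentable as a closed category guarantees the tensor unit $I$ is $\lambda$-presentable and the tensor product of $\lambda$-presentable objects is again $\lambda$-presentable; in particular $I\otimes I$ is $\lambda$-presentable.

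The main step is to describe monoid structure as operations in our sense. A monoid on an object $M$ is a multiplication $m:M\otimes M\to M$ together with a unit $e:I\to M$. The crucial translation is that, via the closed structure, a morphism $M\otimes M\to M$ corresponds to an element of $\cv(M\otimes M, M)$, and more importantly the functor $\cv(-,M)$ carries tensors to cotensors: the object $M^{I\otimes I}=\cv(I\otimes I,M)$ is what plays the role of the arity. First I would take the type $t$ with an $(I\otimes I, I)$-ary operation symbol $\mu$ encoding multiplication and an $(I,I)$-ary operation symbol for the unit (the unit may be packaged as a nullary-style operation via the arity $I$). The associativity and unit laws then become equations between composites of these operation symbols and the structural terms $x_f$ coming from the coprojections and the associator/unitor isomorphisms of $\cv$, all of which live among $\lambda$-presentable arities because $I$, $I\otimes I$, and $I\otimes I\otimes I$ are all $\lambda$-presentable by the closed-category hypothesis. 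This exhibits $E$ as a $\lambda$-ary enriched equational theory with $\Alg(E)\cong\Mon(\cv)$ as $\cv$-concrete categories over $\cv$.

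The hard part will be the bookkeeping of the equational axioms: one must verify that associativity, expressed as the commutativity of the usual pentagon-free square $m\circ(m\otimes\id)=m\circ(\id\otimes m)$, really does transcribe into a genuine equation $p=q$ of terms in the sense of \ref{theory}, where the reindexing morphisms (the associator and the two ways of grouping three tensor factors) appear as terms $x_f$ with $\lambda$-presentable arity $I^{\otimes 3}$. The unit axioms similarly require naming the left and right unitors as structural morphisms and checking that the resulting terms have the asserted arities. None of this is deep, but it is where an error could creep in, since one is translating a diagrammatic definition into the variable-free term calculus. Once $\Mon(\cv)=\Alg(E)$ is established with $E$ being $\lambda$-ary, \ref{emonad0} yields immediately that $\Alg(E)$ is locally $\lambda$-presentable and that $U_E=V$ is monadic, completing the proof.
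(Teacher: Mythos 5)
Your central translation step fails, and it is precisely the point this proposition is designed to get around. In the framework of \ref{theory}, an $(X,Y)$-ary operation symbol is interpreted on $A$ as a morphism $\omega_A:A^X\to A^Y$ where $A^X=\cv(X,A)$ is the (enriched) hom. So your $(I\otimes I,I)$-ary symbol $\mu$ yields a map $\cv(I\otimes I,A)\to\cv(I,A)$; since $I\otimes I\cong I$ and $\cv(I,A)\cong A$, this is nothing but a \emph{unary} operation $A\to A$ — it does not encode a multiplication $m:A\otimes A\to A$. No choice of arity can repair this: to represent multiplication you would need $\cv(X,-)\cong(-\otimes-)\circ\Delta$ naturally in $A$, but $\cv(X,-)$ preserves limits while the tensor-square functor does not in general (in $\Ab$, for instance, $(A\times B)\otimes(A\times B)\not\cong (A\otimes A)\times(B\otimes B)$). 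The cartesian intuition that misled you is that in a cartesian closed $\cv$ one has $A\times A\cong A^{1+1}$ — note it is the \emph{coproduct} $1+1$, not the product $1\times 1\cong 1$, that serves as the binary arity — and this is exactly what is unavailable for a non-cartesian tensor such as the $+$-metric tensor on $\Met$. The paper itself flags this obstruction in \ref{normed}(1): it ``does not seem that $\Norm$ can be given by an equational theory over $\Met$,'' and $\Norm$ is just monoids in $\Met$ with extra unary operations. (A smaller slip: your unit as an $(I,I)$-ary operation is again an endomap $A\to A$; a unit $e:I\to A$ would have to be a $(0,I)$-ary operation, with $0$ the initial object.)

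Accordingly, the paper does not — and cannot — prove \ref{pres} via \ref{emonad0}; it cites Porst \cite{Po}, and the surrounding material (\ref{commute3} and \ref{monoid}) exists precisely because monoids fall outside the equational-theory framework of Sections 3--4, which is intrinsically ``cotensor-ary.'' The standard argument along Porst's lines is the one your proof should follow: $V:\Mon(\cv)\to\cv$ creates limits and, by \ref{commute3}, preserves filtered colimits and reflexive coequalizers; a free monoid functor exists because $\cv$ is locally $\lambda$-presentable and $\otimes$ is suitably cocontinuous; then the (crude) monadicity theorem gives monadicity of $V$, and local $\lambda$-presentability of $\Mon(\cv)$ follows since the induced monad is $\lambda$-ary. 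If you want the equational machinery to apply, you must work with $\otimes$-theories (as the paper does informally), which are a genuinely different notion from the theories of \ref{theory}.
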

Following \ref{monoid}, the corresponding monad preserves sifted colimits. 

\begin{exam}\label{normed}
{
\em
(1) Let $\Norm$ be the category of generalized normed spaces (i.e., norm $\infty$ is allowed) and linear maps of norm $\leq 1$. Analogously as in \cite[Theorem 2.2]{R1}, we show that the forgetful functor $V:\Norm\to\Met$ is monadic. In fact, normed spaces are monoids in $\Met$ equipped with unary
operations $c\cdot-$, for $|c|\leq 1$, satisfying the appropriate axioms. The reason is that $+:VA\otimes VA\to VA$ and $c\cdot-:VA\to VA$ are nonexpanding. It does not seem that $\Norm$ can
be given by an equational theory over $\Met$.

(2) Let $\Ban$ be the category of (complex) Banach spaces and linear maps of norm 
$\leq 1$. Following \cite{R1}, the unit ball functor $U:\Ban\to\Met$ is monadic. Since $U$ is an enriched functor, we get an $\aleph_1$-enriched monad on $\Met$ (see 
\cite[Remark 4.2(3)]{R1}).  The left adjoint $F:\Met\to\Ban$ sends $1$ to the Banach space $\Bbb C$ of complex numbers. Since
$$
			\xymatrix@=3pc{
				1\ar[r]^{} \ar[d]_{} & 1\ar[d]^{} \\
				1 \ar[r]_{} & 2_\eps
			}
			$$
is an $\eps$-pushout for every $\eps>0$ and $\eps$-pushouts are weighted colimits (see \cite[Examples 4.1]{AR1}), $F2_\eps$ is an $\eps$-pushout
$$
			\xymatrix@=3pc{
				\Bbb C\ar[r]^{} \ar[d]_{} & \Bbb C\ar[d]^{} \\
				\Bbb C \ar[r]_{} & F2_\eps
			}
			$$
in $\Ban$. These $\eps$-pushouts are described in \cite[Lemma 6.5]{DR}.
}
\end{exam}

\begin{rem}
{
\em
Following \cite{Re}, every monad given by an operad on $\Met$ preserves sifted colimits.
This captures both monoids and generalized normed spaces.
}
\end{rem}

The category $\CMet$ of complete generalized metric spaces is a $\cv$-reflective full subcategory of $\Met$. It is locally $\aleph_1$-presentable (see 
\cite[Examples 2.3(2]{AR1}) and symmetric monoidal closed. The only $\aleph_0$-presentable object in $\CMet$ is the empty space (see \cite[Remark 2.7(1)]{AR1}). Since the reflector $\Met\to\CMet$ preserves finite products, finite products commute with filtered colimits and reflexive coequalizers in $\CMet$. Hence every finitary unconditional equational theory over $\CMet$ yields an enriched monad on $\CMet$ preserving filtered
colimits and reflexive coequalizers. Since \ref{commute2} is also valid in $\CMet$, it amounts to the preservation of sifted colimits.

\begin{exam}
{
\em
Consider the $\otimes$-theory from \ref{normed}(1) over $\CMet$ given by addition $+:Z\otimes Z\to Z$ and scalar multiplications $c\cdot-:Z\to Z$ for $c\in\Bbb C$, $|c|\leq 1$. Following \cite[Theorem 2.2]{R1}, $\Alg(\ct)=\Ban_\infty$ is the category of generalized Banach spaces with the forgetful functor $U_\ct:\Ban_\infty\to\CMet$. The corresponding monad on $\CMet$ preserves sifted colimits.
}
\end{exam}

Unlike $\Met$, $\CMet$ is not isometry-locally finitely generated (see \cite[Proposition 5.19]{AR1}). But $\Ban$ is isometry-locally finitely generated as a $\CMet$-category (see \cite[Remark 7.8]{AR1}). 
In the same time $\Ban$ is a monoidal category with the projective tensor product satisfying $\pa x\otimes_p y\pa=\pa x\pa\pa y\pa$ (see \cite[Examples 6.1.9h]{Bo}). Hence one can use our results to study monads on $\Ban$. 
\vskip 1mm

\noindent
{\bf Competing interests:} The author declare none.

\end{document}